\theoremstyle{plain}
\newtheorem{definition}{Definition}[section]
\newtheorem{theorem}[definition]{Theorem}
\newtheorem*{theorem*}{Theorem}
\newtheorem*{remark*}{Remark}
\newtheorem*{sideremark*}{Side Remark}\newtheorem*{mt*}{Main Theorem}
\newtheorem*{claim*}{Claim}
\newtheorem*{q*}{Question}
\newtheorem{lemma}[definition]{Lemma}
\newtheorem*{corollary*}{Corollary}
\newtheorem*{proposition*}{Proposition}
\newtheorem{proposition}[definition]{Proposition}
\newcommand{\R}{\mathbb{R}}
\newcommand{\na}{\nabla}
\newcommand{\dd}{{\rm d}}
\newcommand{\p}{\partial}
\newcommand{\e}{\epsilon}
\newcommand{\map}{\rightarrow}
\newcommand{\G}{\Gamma}
\newcommand{\geu}{{g_{\rm Eucl}}}
\newcommand{\two}{{\rm II}}
\newcommand{\barna}{{\overline{\na}}}
\newcommand{\E}{\mathcal{E}}
\newcommand{\M}{{\mathcal{M}}}
\newcommand{\tl}{\widetilde{L}}
\newcommand{\tn}{\widetilde{N}}
\newcommand{\tm}{\widetilde{M}}
\newcommand{\tg}{\widetilde{\G}}
\newcommand{\sone}{{\mathcal{S}^{(1)}}}
\newcommand{\stwo}{{\mathcal{S}^{(2)}}}
\newcommand{\source}{{\mathcal{S}}}
\newcommand{\gtau}{{g^{(\tau)}}}
\newcommand{\TT}{\Theta}
\newcommand{\gaa}{g^{(\alpha)}}
\newcommand{\FF}{\mathcal{F}}
\newcommand{\GG}{\mathcal{G}}
\newcommand{\pone}{{\wp^{(1)}}}
\newcommand{\ptwo}{{\wp^{(2)}}}
\numberwithin{equation}{section}
\numberwithin{figure}{section}
\title{On the existence of $C^{1,1}$ isometric immersions of several classes of negatively curved surfaces into $\R^3$}
\author{Siran Li}
\address{Siran Li: Department of Mathematics, Rice University, MS 136
P.O. Box 1892, Houston, Texas, 77251-1892, USA; \, $\bullet$ \,  Department of Mathematics, McGill University, Burnside Hall, 805 Sherbrooke Street West, Montreal, Quebec, H3A 0B9, Canada.}
\email{\texttt{Siran.Li@rice.edu}}
\date{\today}
\begin{document}

\maketitle

\begin{abstract}
We prove the existence of $C^{1,1}$ isometric immersions of several classes of metrics on surfaces $(\M,g)$ into the three-dimensional Euclidean space $\R^3$, where the metrics $g$ have strictly negative curvature. These include the standard hyperbolic plane,  generalised helicoid-type metrics and generalised Enneper metrics. Our proof is based on the method of compensated compactness and invariant regions in hyperbolic conservation laws, together with several observations on the geometric quantities (Gauss curvature, metric components etc.) of negatively curved surfaces. 

\end{abstract}

\section{Introduction}
	The existence of isometric immersions of a given $n$-dimensional Riemannian manifold $(\M,g)$ into the Euclidean space $\R^N$ is a well-known classical problem in differential geometry and non-linear analysis; \textit{cf.} \cite{cartan, hh, janet, nash, rozendorn, yau} and the references cited therein. Even in the case $(n,N)=(2,3)$, namely the isometric immersions of surfaces into the Euclidean $3$-space, the question of existence remains open in the large. 
	
	From the perspective of Partial Differential Equations (PDEs), several distinct approaches have been developed toward the study of isometric immersions. First, in \cite{nash}, Nash directly tacked the defining equations for isometric immersions: let $g$ be the given Riemannian metric on $\M$; an isometric immersion $f: (\M,g) \map (\R^N, \geu)$ satisfies
	\begin{equation}\label{nash eq}
	\dd f \cdot \dd f =g,
	\end{equation}
where $\geu$ denotes the Euclidean metric on $\R^N$. Eq.\,\eqref{nash eq} forms a first-order non-linear PDE system, which is of no definite type (elliptic, parabolic or hyperbolic). Nash proved the existence of $C^\infty$ isometric immersions for $g \in C^\infty$, provided that the co-dimension $(N-n)$ is sufficiently large. In particular, in the case $n=2$, we need co-dimension up to $15$, even if $\M$ is a compact surface. 

The second approach is via the Darboux equation. Fix any unit vector $\mathbf{e} \in \R^3$ and consider $\phi:=f \cdot \mathbf{e}$; the local existence of an isometric immersion $f: (\M,g) \map (\R^3, \geu)$ is equivalent to the following PDE of Monge--Amp\`ere type, named after Darboux:
\begin{equation}\label{darboux eq}
\det(\na^2 \phi) = \kappa \det(g) (1-|\na \phi|^2),
\end{equation}
provided that $|\na \phi|^2 = g^{ij}\p_i \phi \p_j \phi <1$. Throughout the Einstein summation convention is assumed, and $\kappa$ denotes the Gauss curvature of the surface $(\M,g)$. By establishing the elliptic theory for Monge--Amp\`{e}re equations, Nirenberg \cite{nirenberg} proved the existence of isometric immersions of $(\M,g)$ for $\kappa>0$, $g \in C^4$. Notice that whenever $\kappa>0$, $\M$ is a topological sphere, in view of the Gauss--Bonnet theorem. Guan--Li \cite{guanli} extended Nirenberg's result to the case $\kappa \geq 0$, $g \in C^4$, by passing to the limits of the degenerate-elliptic equation \eqref{darboux eq}. The sign of $\kappa$ is crucial to the study of the existence of classical solution to Eq.\,\eqref{darboux eq}, hence,  of the existence of isometric immersions. For example, it is well-known that the pseudo-2-sphere (space form of constant Gauss curvature $-1$) cannot be isometrically embedded into $\R^3$ via $C^2$ maps; see Hilbert--Cohn Vossen \cite{hilbert}.

The third approach to the existence of isometric immersions is via the analysis of the Gauss--Codazzi equations. Temporarily let us only present the key ideas here; the detailed derivations can be found in \S 2. Let $(L,M,N)$ be the components of the normalised second fundamental form of $f:(\M,g) \map (\R^3, \geu)$. The Codazzi equations are the following first-order PDE system:
\begin{equation}\label{codazzi eqs}
\begin{cases}
M_x - L_y = \G^2_{22} L - 2\G^2_{12} M + \G^2_{11}N,\\
N_x - M_y = -\G^1_{22} L + 2\G^{1}_{12} M - \G^1_{11} N,
\end{cases}
\end{equation}
where $\{\G^{i}_{jk}\}_{1\leq i,j,k \leq 2}$ denote the Christoffel symbols of the Levi--Civita connection on $(\M,g)$. In addition, the Gauss equation is a zeroth-order quadratic relation on $\{L,M,N\}$:
\begin{equation}\label{gauss eq}
LN-M^2 = \kappa.
\end{equation}
By the fundamental theorem of surfaces (see Eisenhart \cite{eisenhart}), for smooth metrics the solubility of the Gauss--Codazzi equations \eqref{codazzi eqs} and \eqref{gauss eq} are equivalent to the existence of isometric immersions. This has been extended in \cite{mardare1, mardare2} by Mardare to the case of $g \in L^p$ with $p > \dim(\M)$: It is shown that the existence of {\em weak solutions} of the Gauss--Codazzi system --- in the sense of distributions --- is equivalent to the existence of $W^{1,p}_{\rm loc}$ isometric immersions; also see Chen--Li \cite{cl1} for a geometric proof in the arbitrary dimension/co-dimension case.

In the recent years, there have been rapid developments in the study the isometric immersions of surfaces with negative $\kappa$ via the aforementioned third approach, {\it i.e.}, by analysing the Gauss--Codazzi equations \eqref{gauss eq}\eqref{codazzi eqs}. Chen--Slemrod--Wang \cite{cswb} (also see the exposition \cite{cswc}) first observed that the Codazzi equations \eqref{codazzi eqs} can be viewed as a system of {\em hyperbolic balance laws} for $\{L,M\}$, once we substitute $N$ by $(\kappa+M^2)/L$ using the Gauss equation \eqref{gauss eq}. In this formulation, if one suitably interprets the geometric quantities in terms of fluid mechanics, the cases of $\kappa>0$, $\kappa=0$ and $\kappa < 0$ correspond to the subsonic, sonic and supersonic fluid flows. In particular, in the supersonic case $(\kappa < 0)$ the resulting system of balance laws is  {\em strictly hyperbolic}, which enables us to prove the existence of weak solutions using tools from hyperbolic PDEs. Indeed, by applying the method of {\em vanishing viscosity} and the theory of {\em invariant regions}, which had been exploited in the works of Morawetz on transonic flows (\cite{morawetz1, morawetz2, morawetz3}), Chen--Slemrod--Wang established the global existence of $C^{1,1}$ isometric immersions of a one-parameter family $\{g_\beta\}$ of  negatively curved metrics, where $g_{\sqrt{2}}$ corresponds to the metric of the classical catenoid. For this reason, the family $\{g_\beta\}$ may be termed as ``generalised catenoids''. Subsequently, using another closely related system of balance laws, Cao--Huang--Wang \cite{chw1} further proved the  existence of $C^{1,1}$ isometric immersions of a family of  ``generalised helicoids'', which includes the usual helicoid as a special case. Moreover,  employing another tool from the hyperbolic balance laws, the {\em Lax--Friedrich scheme}, Cao--Huang--Wang \cite{chw2} proved the global existence of $C^{1,1}$ isometric immersions of the metrics of the form $g=E(y)\dd x^2 + \dd y^2$, under the conditions $\log (E(y)^2\sqrt{-\kappa})$ being a non-increasing $C^{1,1}$ function, $E''(y)=-\kappa E(y)$, plus some mild conditions on the initial data ($y=0$). In particular, \cite{chw2} extended the results in \cite{chw1} on the ``generalised helicoids''. Furthermore, exploiting the theory of BV solutions of balance laws ({\it cf.} Dafermos \cite{dafermos}), Christoforou \cite{christofourou} established the existence of $C^{1,1}$ isometric immersions of the conformal metrics $g_{c,q^*}=(\cosh(cx))^{2/({q^*}^2-1)}(\dd x^2 + \dd y^2)$, where $c>0$, $q>1$ are constants, with conditions on $q^*$ and/or the initial data ($y=0$). The collection $\{g_{c,q^*}\}$ neither contains or is contained in the family of ``generalised catenoids'' as in Chen--Slemrod--Wang \cite{cswb} or Cao--Huang--Wang \cite{chw1}. The existence result in \cite{christofourou} requires further {\it a priori} stability assumptions,  entailed by the ``BV framework'' formulated in the paper.

	To summarise, in the recent works \cite{chw1, chw2, cswb, cswc, christofourou}, various negatively curved metrics of the ``generalised catenoid type'' or the ``generalised helicoid type'' have been proved to possess global $C^{1,1}$ isometric immersions into $\R^3$. 
	
	The main contribution of the current paper is to establish the global or local existence of $C^{1,1}$ isometric immersions for several more families of such metrics. This is done by further exploiting the Gauss--Codazzi equations \eqref{gauss eq}\eqref{codazzi eqs} via the theory of hyperbolic balance laws, as well as exploring the detailed structures of relevant geometric quantities, {\it e.g.}, the Gauss curvature, the metric components and the Christoffel symbols. Before subsequent developments, let us note that the metrics considered in this paper are still far from being general: special structures of the metrics are essential to control the {\em source terms} of the associated hyperbolic balance laws, which play a crucial role in the theory of these PDEs; see Dafermos \cite{dafermos}.

	The remaining parts of the paper is organised as follows. In \S 2 we derive Gauss--Codazzi equations and recast them into suitable hyperbolic balance laws. Next, in \S 3 we briefly discuss the theory of compensated compactness and invariant regions, as well as the application to the Gauss--Codazzi system. Then, in \S \S 4--6 respectively, we establish the existence of $C^{1,1}$ isometric immersions for three families of metrics: the standard hyperbolic (Lobachevsky) plane, ``helicoid-type metrics'' and ``generalised Enneper metrics''. In \S 7 we discuss the non-existence of invariant regions of ``reciprocal-type metrics''. Finally, in \S 8 we conclude with a few remarks.

\section{The PDEs}

\subsection{Gauss--Codazzi equations}
In this paper we let $(\M,g)$ be a $2$-dimensional Riemannian manifold, {\it i.e.}, a surface, with Riemannian metric $g$. The regularity of $g$ is assumed to be at least Lipschitz, but is not required to be smooth or analytic in general. We consider the isometric immersion $f: (\M,g) \map (\R^3, \geu)$: this means that the Euclidean metric pulled back via $f$ coincides with the given metric $g$ on $\M$, namely $f^*\geu = g$. It is equivalent to Eq.\,\eqref{nash eq}.

If an isometric immersion $f: (\M,g) \map (\R^3, \geu)$ exists, for each point $x \in \M$, the tangent space $T_{f(x)}\R^3$ splits into orthogonal vector spaces:
\begin{equation}\label{split}
T_{f(x)}\R^3 \cong T_x\M \bigoplus [T_x\M]^\perp.
\end{equation}
We write $T\M^\perp$ for the normal bundle, {\it i.e.}, the vector bundle over $\M$ with fibres $T_x\M$. Denoting by $\barna$ the trivial Levi--Civita connection on $T\R^3$ and $\na$ the Levi--Civita connection on $T\M$, we  can define $\two: \G(T\M) \times \G(T\M) \map \G(T\M^\perp)$ via
\begin{equation}\label{def of II, eq}
\two(X,Y) := \barna_{X}Y - \na_XY,
\end{equation}
where $\G(\E)$ for a vector bundle $\E$ denotes the space of sections of $\E$. This tensor field $\two$ takes values in the normal bundle, and it describes the manner in which $\M$ is immersed in the ambient Euclidean space. This extrinsic quantity is known as the {\em second fundamental form}. For the immersion $f$ of surfaces into $\R^3$, $T\M^\perp$ is 1-dimensional, so $\two$ can be viewed as a $\R$-valued $2 \times 2$ matrix field (namely, a section of $T^*\M\bigotimes T^*\M$), or equivalently, a field of quadratic forms (namely, a section of $T\M \bigotimes T\M$). Therefore, by a slightly abusive notation we may write
\begin{equation}
\two = \begin{bmatrix}
h_{11} & h_{12}\\
h_{21} & h_{22}
\end{bmatrix} \qquad \text{ or } \qquad  \two = h_{11} \dd x^2 + 2h_{12}\dd x\dd y + h_{22} \dd y^2,
\end{equation}
where $h_{12}=h_{21}$ in view of the definition of $\two$ in Eq.\,\eqref{def of II, eq} and the basic properties of $\barna$ and $\na$. In addition, we also write 
\begin{equation}
\two (X,Y,\eta) := \geu(\two(X,Y), \eta) \qquad \text{for } X,Y \in \G(T\M); \, \eta \in \G(T\M^\perp).
\end{equation}

The Gauss and Codazzi equations express the orthogonal splitting of the zero Riemann curvature of $(\R^3, \geu)$  along Eq.\,\eqref{split}. Let $X,Y,Z,W \in \G(T\M)$ be tangential vector fields and $\eta \in \G(T\M^\perp)$ be a normal vector field. Also, denote the inner products induced by the metrics $g$ or $\geu$ both by $\langle\cdot,\cdot\rangle$, and write $R$ for the Riemann curvature tensor on $\M$. With these preparations, the Gauss equation reads:
\begin{equation}\label{gauss, global}
R(X,Y,Z,W) = \langle \two (X,Z), \two (Y,W) \rangle - \langle \two (X,W), \two (Y,Z) \rangle,
\end{equation}
and the Codazzi equations are as follows:
\begin{equation}\label{codazzi, global}
\barna_Y \two(X,Z,\eta) - \barna_X \two(Y,Z,\eta) = 0.
\end{equation}
Due to our regularity assumptions on $g$, Eqs.\,\eqref{gauss, global}\eqref{codazzi, global} should be understood in the sense of distributions. Here $g$ (hence $R$, due to Gauss's Theorema Egregium) are given functions and $\two$ consist of the unknowns. For the geometric formulations above, we refer to \S 6 in do Carmo \cite{docarmo} or \S 2 in  Chen--Li \cite{cl1} for detailed discussions.

\smallskip
To proceed, let us derive the form of Gauss--Codazzi equations in local coordinates, namely Eqs.\,\eqref{gauss eq}\eqref{codazzi eqs}. These equations are regarded as well-known in Han--Hong \cite{hh} and the aforementioned works \cite{chw1, chw2, cswb, cswc, christofourou}; nevertheless, it is not easy to find in the literature a careful derivation from the first principles, {\it i.e.}, the global Eqs.\,\eqref{gauss, global}\eqref{codazzi, global}. For the convenience of the readers we shall present the derivation below. 

Let $\{\p_1, \p_2\}$ be a local coordinate frame on $T\M$, and $\p_3$ be the unit normal vector field in $T\M^\perp$. Taking $X= \p_1, Y= \p_2, Z=\p_1$ and $W=\p_2$, the Gauss equation \eqref{gauss, global} gives us
\begin{equation}\label{gauss'}
R_{1212} = h_{11}h_{22} - (h_{12})^2,
\end{equation}
where $R_{ijkl}:=R(\p_i, \p_j, \p_k, \p_l)$. For the Codazzi equation, there are two independent choices of coordinates: $(i,j,k)=(1,2,1)$ and $(i,j,k)=(1,2,2)$. They lead respectively to
\begin{equation}\label{codazzi'}
\begin{cases}
\barna_{\p_2}\two(\p_1, \p_1, \p_3) - \barna_{\p_1} \two(\p_2, \p_1, \p_3) = 0,\\
\barna_{\p_2}\two(\p_1, \p_2, \p_3) - \barna_{\p_1}\two(\p_2, \p_2, \p_3)=0.
\end{cases}
\end{equation}
By the Leibniz rule, we have the identity
\begin{align}\label{a}
\barna_{\p_i}\two(\p_j, \p_k, \p_\alpha) &= \p_i \two(\p_j, \p_k \p_\alpha) - \two(\na_{\p_i}\p_j, \p_k, \p_\alpha) \\
&\qquad - \two(\p_j, \na_{\p_i}\p_k, \p_\alpha) - \two(\p_j, \p_k, \na^\perp_{\p_i}\p_{\alpha}),
\end{align}
where $\na^\perp$ is the projection of $\barna$ onto $T\M^\perp$. In our case, the co-dimension of the immersion is $1$; hence $\p_i\langle\p_\alpha, \p_\alpha\rangle = 2\langle\na^\perp_{\p_i}\p_\alpha, \p_\alpha\rangle = 0$, which forces 
\begin{equation*}
\two(\p_j, \p_k, \na^\perp_{\p_i}\p_\alpha) = 0.
\end{equation*}
Thus, together with the definition for the Christoffel symbols
\begin{equation}
\na_{\p_i}\p_j = \G^l_{ij} \p_l,
\end{equation}
Eq.\,\eqref{a} becomes
\begin{equation}\label{b}
\barna_{\p_i}\two(\p_j, \p_k, \p_\alpha) = \p_i h_{jk} - \big\{\G^p_{ij}h_{pk} + \G^p_{ik}h_{pj}\big\}.
\end{equation}
Substituting Eq.\,\eqref{b} into Eq.\,\eqref{codazzi'}, after some  cancellations we arrive at the following:
\begin{equation}\label{codazzi''}
\begin{cases}
\big\{\p_2h_{11} - \p_1 h_{12}\big\} -\G^1_{12}h_{11} + (\G^1_{11} - \G^2_{12})h_{12} + \G^2_{11}h_{22}=0,\\
\big\{\p_2h_{12} - \p_1h_{22}\big\} -\G^1_{22}h_{11} + (\G^1_{12}-\G^2_{22})h_{12} + \G^2_{12}h_{22} = 0.
\end{cases}
\end{equation}

Up to now, we have derived Eq.\,\eqref{gauss'} and \eqref{codazzi''}, which are equivalent to the Gauss and Codazzi equations, respectively. Let us introduce the following {\em normalisation}. For brevity, write
\begin{equation*}
|g|:=\det(g),
\end{equation*}
which is strictly positive. Then, by definition, the {\em Gauss curvature} is
\begin{equation}\label{kappa}
\kappa := \frac{R_{1212}}{|g|}.
\end{equation}
Moreover, we can then define the functions $L,M,N$ on $\M$ by the following:
\begin{equation}
\frac{1}{|g|} \, \two = \frac{1}{|g|} \begin{bmatrix}
h_{11}&h_{12}\\
h_{21}&h_{22}
\end{bmatrix} = \begin{bmatrix}
L & M\\
M&N
\end{bmatrix}.
\end{equation}
So
\begin{equation}\label{c}
\p_i (h_{11}, h_{12}, h_{22})^\top = \sqrt{|g|}\bigg\{\p_i + \frac{\p_i|g|}{2|g|}\bigg\} (L,M,N)^\top.
\end{equation}
On the other hand, using the formula for Christoffel symbols in the local coordinates:
\begin{equation*}
\G^i_{jk} = \frac{1}{2} g^{il} \big\{\p_j g_{kl} + \p_kg_{lj} - \p_l g_{jk}\big\}
\end{equation*}
as well as identities for the derivative of the logarithmic of matrices, we can deduce that
\begin{align}\label{d}
\G^1_{12} + \G^2_{22} &= \frac{1}{2} \Big\{g^{11}\p_2g_{11} + g^{22}\p_2g_{22} + 2g^{12}\p_2g_{12}\Big\}\nonumber\\
&=\frac{1}{2} {\rm trace} \big(\p_i \log g\big) \nonumber\\
&= \p_i\big\{\log (\det \, g)\big\} = \frac{\p_i |g|}{|g|},
\end{align}
thanks to the positive definiteness of $g$. Putting everything together, Eqs.\,\eqref{kappa} and \eqref{gauss'} become the local form of the Gauss equation:
\begin{equation}\label{G}
\kappa = LN-M^2,
\end{equation}
and Eqs.\,\eqref{c}, \eqref{d} applied to Eq.\,\eqref{codazzi''} give us the Codazzi equations:
\begin{equation}\label{C1}
\p_1 M - \p_2 L = \G^2_{22}L-2\G^2_{12}M+\G^2_{11}N,
\end{equation} 
\begin{equation}\label{C2}
\,\,\,\,\p_1 N - \p_2 M = -\G^1_{22}L + 2\G^1_{12}M -\G^1_{11}N,
\end{equation}
which reproduce Eqs.\,\eqref{gauss eq}\eqref{codazzi eqs}.

In the sequel, we shall refer to Eqs.\,\eqref{G}\eqref{C1}\eqref{C2} as the Gauss and Codazzi equations.
We also write $\{\p_1, \p_2\}$ and $\{\p_x, \p_y\}$ interchangeably.

\subsection{Hyperbolic balance laws}
In this subsection, let us transform the Gauss and Codazzi equations to the associated system of hyperbolic balance laws  following Cao--Huang--Wang \cite{chw1, chw2}. To make the paper self-contained, we shall describe the main steps of the transform, with a remark on the necessity of the negative curvature condition $\kappa < 0$.

{\bf Step 1: Fluid formulation.} First of all, let us introduce
\begin{equation}\label{def gamma}
\gamma:=\sqrt{-\kappa},
\end{equation}
which is a positive $C^{1,1}$ function. We further normalise
\begin{equation}\label{tilde LMN}
\begin{bmatrix}
\tl & \tm\\
\tm & \tn
\end{bmatrix} = \frac{1}{\gamma} \begin{bmatrix}
L&M\\
M&N
\end{bmatrix},
\end{equation}
so that the {\em normalised Gauss equation} becomes
\begin{equation}\label{normalised G}
\tl\tn-{\tm}^2 = -1.
\end{equation}
The {\em normalised Codazzi equations} are computed in Eq.\,(2.7) of Cao--Huang--Wang \cite{chw2}:
\begin{eqnarray}
&&\tm_x - \tl_y = \tg^2_{22}\tl - 2\tg^2_{12}\tm + \tg^2_{11} \tn,\label{normalised C1} \\
&&\tn_x-\tm_y = -\tg^1_{22}\tl + 2\tg^1_{12}\tm - \tg^1_{11}\tn. \label{normalised C2}
\end{eqnarray}
with $\tg^i_{jk}$, the normalised Christoffel symbols, given by
\begin{align}
&\tg^{2}_{22} = \G^2_{22} + \frac{\gamma_y}{\gamma}, \qquad \tg^2_{12}= \G^2_{12} + \frac{\gamma_x}{2\gamma}, \qquad \tg^2_{11}=\G^2_{11},\nonumber\\ &\tg^1_{22} = \G^1_{22}, \qquad
\tg^1_{12} = \G^1_{12} + \frac{\gamma_y}{2\gamma}, \qquad \tg^1_{11} = \G^1_{11} + \frac{\gamma_x}{\gamma}.
\end{align}

Now, let us introduce the ``fluid variables'':
\begin{equation}\label{rho, m}
U:= \begin{bmatrix}
\tl\\
-\tm
\end{bmatrix} \equiv \begin{bmatrix}
\rho\\
m
\end{bmatrix},
\end{equation}
where $\rho$ is the density and $m$ 
the momentum of a 1-dimensional compressible fluid. Thus, the normalised Gauss equation \eqref{normalised G} gives us
\begin{equation}
\tn = \frac{m^2-1}{\rho},
\end{equation}
which recasts the normalised Codazzi equations \eqref{normalised C1}\eqref{normalised C2} to the following balance law:
\begin{equation}\label{balance law}
U_y + [\FF(U)]_x = \GG(U),
\end{equation}
where
\begin{equation}
[\FF(U)]_x = \na \FF(U) \cdot U_x = \begin{bmatrix}
m_x\\
\frac{2mm_x}{\rho} - \frac{m^2-1}{\rho^2}\rho_x,
\end{bmatrix}
\end{equation}
and the right-hand side
\begin{equation}
\GG(U) = \begin{bmatrix}
-\tg^2_{22}\rho - 2\tg^2_{12}m - \tg^2_{11} \big(\frac{m^2-1}{\rho}\big)\\
-\tg^1_{22}\rho - 2\tg^1_{12}m -\tg^1_{11}\big(\frac{m^2-1}{\rho}\big)
\end{bmatrix}. 
\end{equation}
The matrix $\na \FF(U)$, where $\na$ is the gradient of $\FF$ with respect to $U$, is
\begin{equation}
\na \FF(U) = \begin{bmatrix}
0 & 1\\
-\frac{m^2-1}{\rho^2} & \frac{2m}{\rho}
\end{bmatrix},
\end{equation}
whose eigenvalues are
\begin{equation}
\lambda_\pm = \frac{m \pm 1}{\rho}.
\end{equation}

The above calculation for eigenvalues requires
\begin{equation*}
\rho \neq 0 \Longleftrightarrow \tl \neq 0 \Longleftrightarrow L \neq 0.
\end{equation*}
As $g$ is symmetric and positive definite, this is automatically satisfied. 

\smallskip
{\bf Step 2: Parabolic regularisation.} Now let us introduce the parabolic regularistion for the normalised Gauss--Codazzi equations, {\it i.e.,} the viscous approximation equations in terms of the fluid variables. For each $\e>0$ sufficiently small, we choose the regularisation as follows:
\begin{eqnarray}
&&(\rho^\e)_y + (m^\e)_x = \e(\rho^\e)_{xx}-\tg^2_{22}(\rho^\e) - 2\tg^2_{12}m^\e - \tg^2_{11}\Big(\frac{(m^\e)^2-1}{\rho^\e}\Big), \label{epsilon C1} \\
&&(m^\e)_y + \Big(\frac{(m^\e)^2-1}{\rho^\e}\Big)_x = \e(m^\e)_{xx}-\tg^1_{22}(\rho^\e) - 2\tg^1_{12}m^\e - \tg^1_{11}\Big(\frac{(m^\e)^2-1}{\rho^\e}\Big). \label{epsilon C2}
\end{eqnarray}

Then, introducing 
\begin{equation}
u:=\frac{m}{\rho}, \qquad v:= \frac{1}{\rho},
\end{equation}
(the superscript $\e$ will be dropped for simplicity), one can write
\begin{equation*}
\rho = \frac{1}{v}, \qquad m = \frac{u}{v}, \qquad \tn = \frac{u^2-v^2}{v}.
\end{equation*}
The regularised equations \eqref{epsilon C1}\eqref{epsilon C2} become
\begin{align} \label{epsilon C1'}
v_y -vu_x + uv_x = \e v_{xx}- \frac{2\e(v_x)^2}{v} + v\tg^2_{22} + 2\tg^2_{12}uv + \tg^2_{11}(u^2-v^2)v,
\end{align}
as well as
\begin{align}\label{epsilon C2'}
&u_y -vv_x +uu_x \nonumber\\
=\,& \e u_{xx}- \frac{2\e u_x v_x}{v} + (\tg^2_{22}-2\tg^1_{12})u + 2\tg^2_{12}u^2 + \tg^1_{11} (v^2-u^2) + \tg^2_{11}u(u^2-v^2) -\tg^1_{22}.
\end{align}

\smallskip
{\bf Step 3: Decoupling.} Taking the sum and the difference of Eqs.\,\eqref{epsilon C1'} and \eqref{epsilon C2'}, one can simplify the first- and second-order terms of $u$ and $v$. This amounts to introducing the {\em Riemann invariants} ({\it cf}. Part III of Smoller \cite{smoller}):
\begin{equation}
w:= u+v, \qquad z：= u-v.
\end{equation}
Thus, let us rewrite the regularised system in terms of the rotated coordinates $\{w,z\}$:
\begin{eqnarray}
&&w_y + zw_x =\e w_{xx} - \frac{2\e v_x w_x}{v} + \sone,\label{wz 1}\\
&& z_y + wz_x = \e z_{xx} - \frac{2\e v_xz_x}{v} + \stwo, \label{wz 2}
\end{eqnarray}
where the source terms $\sone$ and $\stwo$, viewed as functions of $(u, v)$, are inhomogeneous cubic polynomials in $(u,v)$ with coefficients involving linear combinations of $\tg^i_{jk}$:
\begin{align}\label{S1}
\sone &= -\tg^1_{22} \nonumber\\
&\quad  + (\tg^2_{22}-2\tg^1_{12})u + \tg^2_{22}v \nonumber\\
&\quad  + (2\tg^2_{12} - \tg^1_{11})u^2 + \tg^1_{11}v^2 + 2\tg^2_{12} uv \nonumber\\
&\quad + \tg^2_{11}u(u-v)(u+v) + \tg^2_{11}v(u-v)(u+v);
\end{align}
\begin{align}\label{S2}
\stwo &= -\tg^1_{22} \nonumber\\
&\quad + (\tg^2_{22}-2\tg^1_{12})u - \tg^2_{22}v \nonumber\\
&\quad  + (2\tg^2_{12} - \tg^1_{11})u^2 + \tg^1_{11}v^2 - 2\tg^2_{12} uv \nonumber\\
&\quad + \tg^2_{11}u(u-v)(u+v) - \tg^2_{11}v(u-v)(u+v).
\end{align}

Throughout in the sequel we shall work with the system of Eqs.\,\eqref{wz 1}\eqref{wz 2} for the Riemann invariants $w$ and $z$, with the source terms given by  $\sone$ and $\stwo$ in Eqs.\,\eqref{S1}\eqref{S2}. In full generalities, no further simplifications are available for $\sone$ and $\stwo$.

\smallskip
{\bf Step 4: Formulae of $\tg^i_{jk}$ for diagonal metrics.}
After the derivation of the  hyperbolic balance law (Eqs.\,\eqref{wz 1}\eqref{wz 2}), let us remark that, under the following working assumption of this paper, the coefficients of $\sone$ and $\stwo$ can be evaluated easily. Throughout this paper $g$ is assumed to be diagonal (it is natural because $g$ is always diagonalisable as a positive definite symmetric matrix):
\begin{equation}
g = \begin{bmatrix}
E(x,y) & 0\\
0 & G(x,y)
\end{bmatrix}.
\end{equation}
In this case, $\G^i_{jk}$ and thus $\tg^i_{jk}$ are given as follows:
\begin{align}\label{Gamma}
&\G^1_{11} = \frac{E_x}{2E}, \qquad \G^1_{12} = \frac{E_y}{2E}, \qquad \G^1_{22} = -\frac{G_x}{2E},\nonumber\\
&\G^2_{11} = -\frac{E_y}{2G}, \qquad \G^2_{12} = \frac{G_x}{2G}, \qquad \G^2_{22} = \frac{G_y}{2G};
\end{align}
\begin{align}\label{tilde Gamma}
&\tg^1_{11} = \frac{E_x}{2E} + \frac{\gamma_x}{\gamma}, \qquad \tg^1_{12} = \frac{E_y}{2E} + \frac{\gamma_y}{2\gamma}, \qquad \tg^1_{22} = -\frac{G_x}{2E},\nonumber\\
&\tg^2_{11} = -\frac{E_y}{2G}, \qquad \tg^2_{12} = \frac{G_x}{2G} + \frac{\gamma_x}{2\gamma}, \qquad \tg^2_{22} = \frac{G_y}{2G} + \frac{\gamma_y}{\gamma}.
\end{align}

\subsection{$\kappa <0$ is essential} We now discuss the necessity of the assumption of negative curvature to our approach. Suppose $\kappa \geq 0$ in some region $\M' \subset \M$ and that the normalisation factor, again denoted by $\gamma$, where ${\gamma}=\sqrt{\kappa} \in C^{1,1}(\M')$. Also, define the normalised second fundamental form $\{\tl, \tm, \tn\}$ as in Eq.\,\eqref{tilde LMN}, and introduce the fluid variables $\{\rho, m\}$ as in Eq.\,\eqref{rho, m}. (Here the sign of $m$ is not essential; what is important is that we do not introduce further derivatives of $\{\tl, \tm, \tn\}$ in the fluid formulation.) Then, notice that the formulae for the normalised Codazzi equations \eqref{normalised C1}\eqref{normalised C2} remain unchanged; thus, in the balance law \eqref{balance law}, we have
\begin{equation*}
[\FF(U)]_x = \begin{bmatrix}
m_x\\
\frac{2m}{\rho} m_x - \frac{m^2+1}{\rho^2}\rho_x
\end{bmatrix},
\end{equation*} 
hence
\begin{equation}
\na \FF(U) = \begin{bmatrix}
 0 & 1\\
 -\frac{m^2+1}{\rho^2} & \frac{2m}{\rho}
\end{bmatrix}.
\end{equation}

However, the eigenvalues of the matrix $\na F(U)$ in the region $\M'$ can be computed as follows:
\begin{equation}
\lambda_\pm = \frac{1}{2}\bigg\{\frac{2m}{\rho} \pm \sqrt{\Big(\frac{2m}{\rho}\Big)^2 - 4\Big(\frac{m^2+1}{\rho^2}\Big) } \bigg\} = \frac{1}{2} \bigg\{\sqrt{-\frac{4}{\rho^2}} \pm \frac{2m}{\rho}\bigg\},
\end{equation}
which is not a real value. In view of the definition of strict hyperbolicity ({\it cf.} Dafermos \cite{dafermos}), we have proved:
\begin{proposition}\label{propn: kappa < 0}
The balance law \eqref{balance law} associated to the fluid formulation of the normalised Gauss and Codazzi equations \eqref{normalised G}\eqref{normalised C1}\eqref{normalised C2} in $\S 2.2$ is strictly hyperbolic if and only if the Gauss curvature $\kappa$ of the manifold $(\M, g)$ is negative.
\end{proposition}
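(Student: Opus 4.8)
The plan is to compute the eigenvalues of the flux Jacobian $\na\FF(U)$ directly and to track the sign of the discriminant of its characteristic polynomial, which will turn out to be that of $-\kappa$. Recall that, by definition (\textit{cf.} Dafermos \cite{dafermos}), strict hyperbolicity of \eqref{balance law} means that $\na\FF(U)$ possesses two real and pairwise distinct eigenvalues at every admissible state $U=(\rho,m)^\top$; since $g$ is symmetric positive definite one has $\rho=\tl\neq 0$ everywhere, so all the rational expressions below are well-defined.

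For the implication $\kappa<0\Rightarrow$ strict hyperbolicity, nothing new is needed: this is precisely the computation already recorded in Step 1 of \S 2.2. With $\gamma=\sqrt{-\kappa}$ and $\tn=(m^2-1)/\rho$ from \eqref{normalised G}, the matrix $\na\FF(U)$ displayed there has characteristic polynomial $\lambda^2-\tfrac{2m}{\rho}\lambda+\tfrac{m^2-1}{\rho^2}$, whose discriminant is $\tfrac{4}{\rho^2}>0$; hence $\lambda_\pm=\tfrac{m\pm 1}{\rho}$ are real, and they are distinct because $\rho\neq 0$.

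For the converse I would argue by contraposition, assuming $\kappa\geq 0$ on a region $\M'\subset\M$. If $\kappa>0$ on $\M'$, I run the identical fluid formulation with $\gamma=\sqrt{\kappa}\in C^{1,1}(\M')$; dividing $LN-M^2=\kappa$ by $\gamma^2$ now yields $\tl\tn-\tm^2=+1$, so $\tn=(m^2+1)/\rho$, and the flux Jacobian is exactly the one displayed in \S 2.3, with characteristic polynomial $\lambda^2-\tfrac{2m}{\rho}\lambda+\tfrac{m^2+1}{\rho^2}$ of discriminant $-\tfrac{4}{\rho^2}<0$: its eigenvalues are non-real, so the system is not even hyperbolic. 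If instead $\kappa$ vanishes at some point of $\M'$, the normalisation \eqref{tilde LMN} degenerates there, so at such a point I would work with the unnormalised state $U=(L,-M)^\top$; by \eqref{G} one then has $N=M^2/L=m^2/\rho$, the corresponding flux Jacobian has characteristic polynomial $(\lambda-\tfrac{m}{\rho})^2$, and the single eigenvalue $\lambda=\tfrac{m}{\rho}$ is repeated, again ruling out strict hyperbolicity. Combining the three regimes gives the claimed equivalence.

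I do not expect a genuine obstacle here: the whole argument reduces to the identity that the discriminant of the characteristic polynomial of $\na\FF(U)$ equals $\tfrac{4m^2}{\rho^2}-\tfrac{4(\kappa+m^2)}{\rho^2}=-\tfrac{4\kappa}{\rho^2}$ together with reading off its sign. The only point requiring a little care is the borderline case $\kappa=0$, where the normalisation factor $\gamma$ degenerates and one must pass back to the unnormalised balance law to exhibit the real-but-repeated characteristic speed; this is exactly why the statement distinguishes strict hyperbolicity from mere hyperbolicity.
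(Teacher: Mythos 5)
Your proposal is correct and follows essentially the same route as the paper: both directions reduce to reading off the sign of the discriminant of the characteristic polynomial of $\na\FF(U)$, which is $4/\rho^2>0$ when $\kappa<0$ and $-4/\rho^2<0$ when $\kappa>0$ after normalising by $\gamma=\sqrt{\mp\kappa}$. The only difference is that you explicitly treat the borderline case $\kappa=0$ (where the normalisation degenerates) by reverting to the unnormalised state and exhibiting the repeated real eigenvalue $m/\rho$; the paper's \S 2.3 formally subsumes $\kappa=0$ under $\kappa\geq 0$ but its computation really only applies where $\kappa>0$, so your extra care there is a small but genuine improvement.
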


The above proposition  explains why we need to restrict to the case $\kappa <0$ in \cite{cswb,cswc,chw1,chw2} and this paper. It echoes that in the Darboux equation formulation for the isometric immersion problem, $\kappa > 0$ ensures the ellipticity of Eq.\,\eqref{darboux eq}.

In passing, we comment that the  isometric immersion problems have also been transformed to hyperbolic PDEs by taking sufficiently many derivatives of the immersion maps. In the work \cite{han} by Han, the local existence of $C^{r-6}$ isometric immersions near the origin $(x,y)=(0,0)$ for $g \in C^r$ is proved by analysing the Darboux equation \eqref{darboux eq}, for $r \geq 9$, $\kappa(0)=0$ and $\na\kappa(0) \neq 0$. 
Han \cite{han} simplified the earlier arguments due to Lin \cite{lin}. For this purpose, one considers the ansatz $u=u_0 + \e^6 \widetilde{u}$ of the classical solution to the Darboux equation \eqref{darboux eq}, where $u_0$ is an approximate background solution. The crucial observation in \cite{han} is that certain {\em higher order derivatives of $\widetilde{u}$} satisfy a first-order positive symmetric hyperbolic system, which can be solved by carrying out usual energy estimates. Let us also mention that in \cite{asian}, Chen--Clelland--Slemrod--Wang--Yang obtained a simplified proof of the local existence of a smooth isometric embedding of a smooth 3-dimensional Riemannian manifold with non-zero Riemannian curvature tensor into 6-dimensional Euclidean space, also by reducing to first-order positive symmetric hyperbolic systems. We also refer to Efimov \cite{efimov}, Tunitski\u{i} \cite{russian1}, Poznyak \cite{russian2}, Rozendorn \cite{rozendorn} and the  references therein for the pioneering works on the isometric immersions of surfaces of negative curvatures by the Russian school.

\section{Compensated Compactness}
In this section we discuss our central analytic tool --- compensated compactness.

Introduced by Murat \cite{murat} and Tartar \cite{tartar}, the theory of compensated compactness has been extensively employed in  non-linear hyperbolic PDEs and  applications to gas dynamics and elasticity; see Ball \cite{ball}, Chen--Slemrod--Wang \cite{cswa}, Dafermos \cite{dafermos}, Ding--Chen--Luo \cite{dcl}, DiPerna \cite{dp1, dp2}, Evans \cite{evans}, Morawetz \cite{morawetz1, morawetz2, morawetz3}, Tartar \cite{tartar} and the references cited therein. Recently, its applications to the isometric immersions problem have been discovered by Chen--Slemrod--Wang \cite{cswc, cswd} and studied by Cao--Huang--Wang \cite{chw1, chw2} and Christofourou \cite{christofourou}, among others.  

In this paper, as in \cite{cswc, chw1, chw2, christofourou}, we prove the existence of weak solutions to the Gauss--Codazzi equations by the method of vanishing artificial viscosity, for which the vanishing viscosity limit is obtained in virtue of the compensated compactness method. The uniform $L^\infty$ estimate that guarantees the existence of the vanishing viscosity limit is established by the method of {\em invariant regions}. Finally, the existence of weak solutions to the Gauss--Codazzi equations imply the existence of $C^{1,1}$ isometric immersions of the corresponding surfaces into $(\R^3, \geu)$, due to Mardare \cite{mardare1, mardare2} and Chen--Li \cite{cl1}.

\subsection{Uniform $L^\infty$ estimates via invariant regions}

To begin with, we shall analyse the system of parabolic equations \eqref{wz 1}\eqref{wz 2} for the Riemann invariants $w$ and $z$ (see Step 3 in \S 2.2). Recall that these PDEs are obtained as the parabolic regularisations of the original hyperbolic system of Codazzi equations, via adding to the Codazzi equations the terms of ``artificial 
viscosity'', namely $\e v_{xx}\equiv \e(v^\e)_{xx}$ and $\e u_{xx}\equiv\e(u^\e)_{xx}$. Our goal is to show that, as $\e \map 0^+$, $\{u^\e, v^\e\}$ converges to the weak solution of Eqs.\,\eqref{wz 1}\eqref{wz 2}. 

Introduce the state (column) vector field
\begin{equation}
U \equiv U^\e := (w^\e, z^\e)^\top \equiv (w,z)^\top;
\end{equation}
we view $U$ as the map $(u,v)^\top \mapsto (w,z)^\top$, a transform from $\R^2$ to itself. For simplicity let us systematically drop the superscripts $\e$. Then, with
\begin{equation}
\source=\source(u,v):= \big(\sone,\stwo\big)^\top
\end{equation}
the system \eqref{wz 1}\eqref{wz 2} can be recast into the following balance law:
\begin{equation}\label{system of balance law for w,z}
U_y =\e U_{xx} + \underbrace{\begin{bmatrix}
-2\e v_xv^{-1}-z &0\\
0 & -2\e v_xv^{-1}-w
\end{bmatrix}}_{=: M(u,v)} U_x + \source.
\end{equation}

Now let us explain the method of invariant regions: our presentation  follows closely the classical paper \cite{ccs} by Chueh--Conley--Smoller. Consider the PDE system for $U=U(x,t):\R^m \times \R \map \R^n$
\begin{equation}\label{generic system}
U_t = \e D\cdot\Delta U + \sum_{j=1}^m M^j U_{x_j} + \source,
\end{equation}
where $D$ is a positive definite $n\times n$ matrix, $\{M^1, \ldots, M^m\}$ are $n\times n$ matrices and $\source:\R^m \times \R \map \R^n$. We say that $Q \subset {\rm Image}(U) \subset \R^n$ is an {\em invariant region} of Eq.\,\eqref{generic system} if $U_0(x):=U(x,t_0) \in Q$ implies that $U(x,t)\in Q$, whenever $t_0 \leq t \leq T_\star$ ($T_\star$ = the lifespan of the solution). Then, the following characterisation of invariant regions of Eq.\,\eqref{generic system} is at hand:
\begin{proposition}[Theorem 4.4 in \cite{ccs}] \label{propn: inv region}
In the above setting, consider
\begin{equation}
Q:=\bigcap_{i=1}^d \Big\{z\in\R^n: G_i(z) \leq 0, \text{ $G_i:{\rm Image} (U) \map \R$ are smooth functions}\Big\}.
\end{equation}
Then $Q$ is an invariant region for Eq.\,\eqref{generic system} for all $\e>0$ if and only if the following three conditions holds:
\begin{enumerate}
\item
$\na_{U_0} G_i$ is a left eigenvector for $D$ and each $M^j$, $j=1,2,\ldots m$;
\item
For each vector $\eta \in \R^n$, we have $\na^2_{U_0}G_i(\eta,\eta)\geq 0$ whenever $\na_{U_0} G_i(\eta)=0$;
\item
$\na_{U_0} G_i \cdot \source (U_0) \leq 0$.
\end{enumerate}
In the above, $U_0$ is an arbitrary point on $\p Q$.
\end{proposition}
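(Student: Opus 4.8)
The plan is to prove the two implications of the ``if and only if'' separately: that conditions (1)--(3) are sufficient for invariance, by a parabolic maximum principle in the spirit of the Nagumo/Weinberger invariance theory, and that each of them is necessary, by exhibiting initial data in $Q$ whose solution escapes $Q$ instantly. For the sufficiency direction, fix $\e>0$ and a solution $U$ of Eq.\,\eqref{generic system} with $U(\cdot,t_0)\in Q$, and note that $U(\cdot,t)\in Q$ is equivalent to $G_i(U(\cdot,t))\le 0$ for every $i$. Arguing by contradiction, let $t_*>t_0$ be the first time the solution touches $\partial Q$, so there are an index $i$ and a point $x_*$ with $G_i(U(x_*,t_*))=0$ while $G_j(U(x,t))\le 0$ for all $j$ and all $t\le t_*$; write $G:=G_i$, $g(x,t):=G(U(x,t))$ and $U_0:=U(x_*,t_*)\in\partial Q$. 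Applying the chain rule to Eq.\,\eqref{generic system} gives
\begin{equation*}
g_t \;=\; \e\,\na G\cdot D\,\Delta U \;+\; \sum_{j=1}^{m}\na G\cdot M^j U_{x_j} \;+\; \na G\cdot\source ,
\end{equation*}
and differentiating $\partial_{x_k}g=\na G\cdot U_{x_k}$ once more yields $\na G\cdot\Delta U=\Delta g-\sum_k\na^2 G(U_{x_k},U_{x_k})$. At $(x_*,t_*)$ the map $g(\cdot,t_*)$ attains a spatial maximum equal to $0$, so $\na_x g=0$ (equivalently $\na G\cdot U_{x_j}=0$ for every $j$), $\Delta g\le 0$ and $g_t\ge 0$. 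Feeding in the hypotheses, all evaluated at $U_0$: by (1) one has $\na G\cdot D=\nu\,\na G$ with $\nu=\na G\,D\,(\na G)^{\top}/|\na G|^2>0$ (positivity from the positive definiteness of $D$) and $\na G\cdot M^j=\mu_j\,\na G$, so the convection terms drop, $\sum_j\mu_j(\na G\cdot U_{x_j})=0$, and the diffusion term equals $\e\nu\big(\Delta g-\sum_k\na^2 G(U_{x_k},U_{x_k})\big)\le 0$ by $\Delta g\le 0$ and condition (2) (which applies precisely because $\na G\cdot U_{x_k}=0$); finally (3) gives $\na G\cdot\source(U_0)\le 0$. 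Hence $g_t(x_*,t_*)\le 0$. After the routine reduction to a strict inequality --- e.g.\ replacing $g$ by $g-\delta(1+|x|^2+t)$, which simultaneously forces the spatial maximum to be attained, and then letting $\delta\to 0^+$ --- this contradicts $g_t\ge 0$, so no such $t_*$ exists and $Q$ is invariant.

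For the necessity direction I would show that if one of (1)--(3) fails at some $U_0\in\partial Q$ with $G_i(U_0)=0$, then $Q$ cannot be invariant. If (3) fails, i.e.\ $\na G_i\cdot\source(U_0)>0$, take the spatially constant datum $U(\cdot,t_0)\equiv U_0$: then $U$ solves the ODE $\dot U=\source(U)$ and $\tfrac{d}{dt}\big|_{t_0}G_i(U)=\na G_i\cdot\source(U_0)>0$, so $U$ leaves $Q$ immediately. If (1) or (2) fails, use a perturbed datum $U(\cdot,t_0)=U_0+\phi(x)\eta$ with $\phi$ a small smooth bump peaked at the origin and $\eta\in\R^n$ chosen adversarially --- so that some $\na G_i\cdot D\eta$ or $\na G_i\cdot M^j\eta$ has a component transverse to $\na G_i$ when (1) fails, or $\na G_i\cdot\eta=0$ but $\na^2 G_i(\eta,\eta)<0$ when (2) fails; a short-time Taylor expansion of $g(x,t)=G_i(U(x,t))$ near the maximum of $\phi$ then produces a point with $g>0$ for small $t-t_0$, again contradicting invariance.

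\textbf{The main obstacle} I anticipate lies in the technical housekeeping of the sufficiency argument rather than in its structure: turning the non-strict inequality $g_t(x_*,t_*)\le 0$ into an honest contradiction requires the customary perturbation of the test function (or a slight shrinking of $Q$), and on the unbounded domain $\R^m$ one must guarantee that the spatial maximum of $g(\cdot,t_*)$ is actually attained --- which is cleanest when $Q$ is bounded, so that $U$ is a priori bounded, together with the penalty term above that drives $g\to-\infty$ at spatial infinity. The necessity constructions are morally straightforward but require some care in choosing $\eta$ and in reading off the sign of the leading order term of the expansion; all of this becomes routine once the sufficiency computation --- which pinpoints exactly where (1), (2) and (3) enter --- has been carried out.
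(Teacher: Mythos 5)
First, a point of comparison: the paper does not prove this proposition at all --- it is quoted, with attribution, as Theorem 4.4 of Chueh--Conley--Smoller \cite{ccs}, so there is no in-paper argument to measure yours against. Judged on its own, your sketch is the standard maximum-principle proof of that theorem, and you correctly identify where each hypothesis enters: (1) diagonalises the diffusion term and kills the convection term at a first touching point (via $\na G\cdot U_{x_j}=0$ there), (2) controls the Hessian correction $\sum_k\na^2 G(U_{x_k},U_{x_k})$ precisely because that correction is evaluated on vectors in the kernel of $\na G$, and (3) fixes the sign of the zeroth-order contribution. The necessity constructions (spatially constant data for (3), a localised bump $U_0+\phi(x)\eta$ for (1) and (2)) are likewise the ones used in \cite{ccs}, though you only gesture at the Taylor expansion that makes them work.

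The one step that does not work as written is the proposed regularisation $h:=g-\delta(1+|x|^2+t)$. At a maximum of $h$ one has $\na_x g=2\delta x_*\neq 0$, i.e.\ $\na G\cdot U_{x_k}\neq 0$; this simultaneously destroys the cancellation of the convection term and the applicability of hypothesis (2), which is assumed only on the kernel of $\na_{U_0}G_i$, and the resulting error terms are of size $\delta|x_*|$ with $|x_*|$ a priori unbounded as $\delta\to 0^+$. The standard repair (as in \cite{ccs} and in Theorem 14.7 of Smoller \cite{smoller}) is to perturb the \emph{region} or the \emph{equation} rather than the test function: first establish invariance under the strict versions of the hypotheses (e.g.\ for a source term pushed strictly inward along $\p Q$ and slightly shrunken sets $\{G_i\le-\sigma\}$), where the first-contact argument closes without any penalisation, and then pass to the limit using continuous dependence on the data; the attainment of the spatial supremum on $\R^m$ is handled there by the assumed boundedness and decay class of the solutions rather than by a quadratic penalty. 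With that substitution, and with the bump construction for the necessity of (1)--(2) carried out in detail, your outline becomes a complete proof of the cited result.
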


Specialising to our system \eqref{system of balance law for w,z}, let us take $m=1$, $D={\rm Id}$ and $d=4$. The following choice for $\{G_i=G_i(w,z)\}_{i=1}^4$ clearly satisfies Conditions (1)(2) in Proposition \ref{propn: inv region}:
\begin{equation}\label{G_i}
G_1 := w+c_1, \qquad G_2:= z+c_2, \qquad G_3:=-w+c_3,\qquad G_4:=-z+c_4,
\end{equation}
where the constants $c_1,\ldots, c_4$ are chosen such that $Q:=\bigcap_{i=1}^4 \{G_i \leq 0\}$ is a closed square whose sides intersecting the $u$ and $v$-axes at an angle of $\pi/4$. Moreover, in the $(u,v)$-plane let us denote the edges of $Q$ by $e_i:=\{G_i=0\}$ (hence $\p Q = \bigcup_{i=1}^4 e_i$). To fix the notations, we require that the edge $e_1$ connects the top and the right vertices of $Q$ in the $(u,v)$-plane, $e_2$ connects the right and the bottom vertices, $e_3$ the bottom and the left vertices, and $e_4$ the left and the top vertices. In addition, we we $(\text{top, right, bottom, left vertices}) := (\bullet_t, \bullet_r, \bullet_b, \bullet_l)$.

Thus, Proposition \ref{propn: inv region} immediately leads to the following criterion for invariant regions to the Gauss--Codazzi equations:
\begin{proposition}\label{propn: inv regions VIP}
The square $Q$ specified by Eq.\,\eqref{G_i} is an invariant region for the balance law \eqref{system of balance law for w,z} if and only if $\sone \leq 0$ on $e_1$, $\stwo \leq 0$ on $e_2$, $\sone \geq 0$ on $e_3$ and $\stwo \geq 0$ on $e_4$.
\end{proposition}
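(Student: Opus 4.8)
The plan is to apply Proposition~\ref{propn: inv region} directly to our specific system~\eqref{system of balance law for w,z}, with the concrete choice of defining functions $G_1, \ldots, G_4$ in Eq.\,\eqref{G_i}. Since the general theory reduces the verification of an invariant region to the three pointwise conditions (1), (2), (3) on $\p Q$, and since the first two conditions are already built into our choice of linear $G_i$ (as remarked right after Eq.\,\eqref{G_i}), the entire proof collapses to the third condition, $\na_{U_0} G_i \cdot \source(U_0) \leq 0$, examined edge by edge.

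First I would record the gradients: in the $(w,z)$ coordinates we have $\na G_1 = (1,0)$, $\na G_2 = (0,1)$, $\na G_3 = (-1,0)$, $\na G_4 = (0,-1)$. Recalling from \S 3.1 that $\source = (\sone, \stwo)^\top$ in the $(w,z)$ variables (more precisely, $\source$ written as a function of $(u,v)$ via $w=u+v$, $z=u-v$), the dot products are simply $\na G_1 \cdot \source = \sone$, $\na G_2 \cdot \source = \stwo$, $\na G_3 \cdot \source = -\sone$, and $\na G_4 \cdot \source = -\stwo$. Hence Condition (3) on $e_1 = \{G_1 = 0\}$ becomes $\sone \le 0$ there; on $e_2$ it becomes $\stwo \le 0$; on $e_3$ it becomes $-\sone \le 0$, i.e.\ $\sone \ge 0$; and on $e_4$ it becomes $\stwo \ge 0$. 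This is precisely the stated criterion. The "only if" direction follows from the same equivalence: by Proposition~\ref{propn: inv region} the stated sign conditions are necessary as well as sufficient for $Q$ to be invariant for all $\e > 0$, since Conditions (1) and (2) hold automatically.

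One point that needs a sentence of care is keeping straight the correspondence between the vertices/edges of $Q$ as a square in the $(u,v)$-plane and the sign pattern of the linear functionals $w = u+v$, $z = u-v$. Because the sides of $Q$ meet the axes at angle $\pi/4$, each edge $e_i$ is a level set of exactly one of $w$ or $z$, which is exactly what makes the gradients $\na G_i$ eigenvectors of $\mathrm{Id}$ and of the (diagonal) matrix $M(u,v)$ in Eq.\,\eqref{system of balance law for w,z}, thereby confirming Condition (1); I would note this explicitly since it is what licenses the reduction to Condition (3). I do not anticipate a genuine obstacle here: the statement is essentially a bookkeeping specialisation of Chueh--Conley--Smoller's theorem, and the only thing to get right is the orientation of the edges and the signs. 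The real work — verifying that these sign conditions actually hold for the specific metrics studied in \S\S 4--6 — is deferred to those later sections and is not part of this proposition.
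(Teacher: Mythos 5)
Your proposal is correct and matches the paper's treatment: the paper derives this proposition as an immediate specialisation of Proposition~\ref{propn: inv region}, exactly as you do, with Conditions (1) and (2) holding automatically for the linear $G_i$ (whose gradients $(\pm 1,0)$, $(0,\pm 1)$ in the $(w,z)$ coordinates are left eigenvectors of $\mathrm{Id}$ and of the diagonal matrix $M(u,v)$), so that only the sign condition (3) on each edge survives. Your edge-by-edge bookkeeping and the resulting sign pattern agree with the statement.
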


That is, the uniform $L^\infty$ estimates on the Riemann invariants $w$ and $z$ (which is ensured by the definition of invariant regions) can be deduced if one can draw a square $Q$ in the $(u,v)$-plane such that, on the four sides of it, the source terms $\sone, \stwo$ have correct signs. On the other hand, $\sone, \stwo$ are given explicitly by Eqs.\,\eqref{S1} and \eqref{S2}; in various nice cases, their zero loci are determined by cubic algebraic curves (denoted by $\wp$ in the sequel) in the $(u,v)$-plane. In view of Proposition \ref{propn: inv regions VIP}, in these cases the analytic problem of proving  uniform $L^\infty$ estimates can  be translated to the geometric problem of finding suitable squares $Q$ satisfying the sign conditions prescribed by the relative positions of the zero loci of $\sone, \stwo$.

Before further development, we remark that the method of invariant regions have been employed by Morawetz \cite{morawetz1, morawetz2, morawetz3} in the study of transonic flows (also see Chen--Slemrod--Wang \cite{cswa}), as well as in various reaction-diffusion models (see \cite{ccs} and the references therein).

\subsection{$C^{1,1}$ Weak rigidity of Gauss--Codazzi equations and isometric immersions}

With the uniform $L^\infty$ estimates on the components of second fundamental forms at hand, we can send $\e\map 0^+$ ({\it i.e.}, evaluate the vanishing viscosity limit) and show that the weak limit is still a weak solution to the Gauss--Codazzi equations. Moreover, back to the geometric problem, we can find an isometric immersion whose second fundamental form is prescribed by the above weak limit.

The above result is known as the weak continuity of the nonlinear PDEs (Gauss--Codazzi), or the weak rigidity of isometric immersions. In fact, we have the following Proposition \ref{propn: chen-li}, which is a special case of Theorem 4.1 and Remark 4.1 in Chen--Li \cite{cl1}. In \cite{cl1} the isometric immersion of arbitrary dimensions and co-dimensions are studied. We also mention that the last part of the following proposition had been obtained by Mardare in \cite{mardare1, mardare2}.
\begin{proposition}\label{propn: chen-li}
Let $\{L^\e, M^\e, N^\e\}$ be a family of functions on a surface $(\M, g)$ where $g\in W^{1,p}$ for $p \in [2,\infty]$. Suppose that their $L^p$ norms are uniformly bounded on compact subsets of $\M$, and that they approximately satisfy the Gauss--Codazzi equations \eqref{gauss eq}\eqref{codazzi eqs} in the following sense:
\begin{eqnarray}
&& L^\e N^\e - (M^\e)^2 = O_1(\e),\\
&&(M^\e)_x - (L^\e)_y = \G^2_{22} L^\e - 2\G^2_{12} M^\e + \G^2_{11}N^\e + O_2(\e),\\
&&(N^\e)_x - (M^\e)_y = -\G^1_{22} L^\e + 2\G^{1}_{12} M^\e - \G^1_{11} N^\e +O_3(\e),
\end{eqnarray}
where $O_i(\e)\map 0$ in $W^{1,-r}$ for $r>1$ as $\e \map 0^+$, $i\in\{1,2,3\}$, and the above equalities are understood in the sense of distributions. Then, as $\e \map 0^+$, after passing to subsequences, $\{L^\e, M^\e, N^\e\}$ converges weakly in $L^p_{\rm loc}$ to $\{\overline{L}, \overline{M}, \overline{N}\}$, which is a weak solution to the Gauss--Codazzi equations. Moreover, there exists an $C^{1,1}$ $($or $W^{2,\infty})$ isometric immersion of $(\M ,g)$ into $\R^3$, whose second fundamental form is $\begin{bmatrix}
\overline{L} &\overline{M}\\
\overline{M} &\overline{N}
\end{bmatrix}$.
\end{proposition}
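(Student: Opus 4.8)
The plan is to split the statement into two essentially independent parts: (i) the \emph{weak stability} of the Gauss--Codazzi system --- an approximating family converges, along a subsequence, weakly to a genuine weak solution; and (ii) the \emph{realisation} of an $L^p$ weak solution of Gauss--Codazzi on $(\M,g)$ as the second fundamental form of a $W^{2,p}_{\rm loc}$ (for $p=\infty$, a $C^{1,1}$) isometric immersion. Part (ii) is exactly the content of Mardare \cite{mardare1, mardare2} and Chen--Li \cite{cl1}: one integrates the Gauss--Weingarten (Pfaffian) system for a suitable moving frame $\{\p_1 f, \p_2 f, \nn\}$, whose compatibility conditions are precisely \eqref{gauss eq}\eqref{codazzi eqs}, now satisfied by the weak limit; a Poincar\'e-lemma/Frobenius argument in the appropriate function spaces then reconstructs $f$. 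I would invoke this rather than reprove it, so the mathematical substance reduces to part (i), where compensated compactness enters.

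For (i), I would first extract the limit: since $\|L^\e\|_{L^p(K)}$, $\|M^\e\|_{L^p(K)}$, $\|N^\e\|_{L^p(K)}$ are uniformly bounded for every $K \Subset \M$, Banach--Alaoglu plus a diagonal argument over an exhaustion of $\M$ yields a subsequence with $(L^\e,M^\e,N^\e) \weak (\bar L,\bar M,\bar N)$ in $L^p_{\rm loc}$ (weak-$*$ if $p=\infty$). Passing to the limit in the two Codazzi equations is routine, because they are \emph{linear} in $(L^\e,M^\e,N^\e)$ with fixed coefficients $\{\G^i_{jk}\}$ determined by $g$, differentiation is continuous in the distributional topology, and $O_2(\e),O_3(\e)\to 0$; the only point requiring a word is the convergence of the products $\G^i_{jk}L^\e$, $\G^i_{jk}M^\e$, $\G^i_{jk}N^\e$ against test functions, which is immediate for $p=\infty$ (as $\G^i_{jk}\in L^\infty$) and handled by a cutoff/density argument for $p<\infty$ as in \cite{cl1}. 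Hence $(\bar L,\bar M,\bar N)$ satisfies \eqref{codazzi eqs} in the sense of distributions.

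The crux is the \emph{quadratic} Gauss equation \eqref{gauss eq}, for which weak convergence does not commute with the nonlinearity; here I would apply the div--curl lemma of Murat--Tartar \cite{murat, tartar}. The first Codazzi equation identifies $\mathrm{curl}(L^\e, M^\e)$ with $\G^2_{22}L^\e - 2\G^2_{12}M^\e + \G^2_{11}N^\e + O_2(\e)$, and the second identifies $\mathrm{div}(N^\e,-M^\e)$ with $-\G^1_{22}L^\e + 2\G^1_{12}M^\e - \G^1_{11}N^\e + O_3(\e)$; a Murat-type compactness lemma (using $p\ge 2$ and the $W^{-1,r}$ control of the $O_i(\e)$) shows that both of these are precompact in $H^{-1}_{\rm loc}$. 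The div--curl lemma then gives $(N^\e,-M^\e)\cdot(L^\e,M^\e)=L^\e N^\e - (M^\e)^2 \weak \bar L\,\bar N - \bar M^2$ in the sense of distributions; since this same expression equals $\kappa + O_1(\e)\to\kappa$, we conclude $\bar L\,\bar N - \bar M^2 = \kappa$. Thus $(\bar L,\bar M,\bar N)$ is a weak solution of the full Gauss--Codazzi system, and feeding it into part (ii) furnishes the desired $C^{1,1}$ isometric immersion with this second fundamental form.

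I expect the only genuinely delicate point to be the $H^{-1}_{\rm loc}$-compactness needed to run the div--curl lemma at the stated low regularity $g\in W^{1,p}$: the right-hand sides of \eqref{codazzi eqs} are \emph{products} of the barely-integrable Christoffel symbols with the $L^p$ quantities $L^\e,M^\e,N^\e$, and so a priori lie only in $L^{p/2}_{\rm loc}$. For $p=\infty$ --- the case relevant to $C^{1,1}$ immersions, and the one used throughout this paper --- these products are bounded in $L^\infty_{\rm loc}$, hence precompact in $H^{-1}_{\rm loc}$ by Rellich's theorem, and the argument is self-contained; for $2<p<\infty$ one still has a compact embedding $L^{p/2}_{\rm loc}\hookrightarrow H^{-1}_{\rm loc}$ in two dimensions; the borderline $p=2$ needs the measure-theoretic refinement of Murat's lemma carried out in \cite{cl1}. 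I would therefore write up the $p=\infty$ case in full and defer the general $p$ to \cite{mardare1, mardare2, cl1}.
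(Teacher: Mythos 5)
Your proposal is correct and follows essentially the same route as the paper, which offers no proof of its own but cites Theorem 4.1 and Remark 4.1 of Chen--Li \cite{cl1} together with Mardare \cite{mardare1, mardare2}: weak-$*$ extraction, linear passage to the limit in the Codazzi equations, the Murat--Tartar div--curl lemma applied to the pair $(L^\e, M^\e)$ and $(N^\e, -M^\e)$ to handle the quadratic Gauss expression $L^\e N^\e - (M^\e)^2$, and then the low-regularity fundamental theorem of surfaces to realise the limit as the second fundamental form of a $W^{2,p}_{\rm loc}$ (hence $C^{1,1}$ when $p=\infty$) immersion. Your flagging of the $H^{-1}_{\rm loc}$-compactness issue at the borderline $p=2$ (requiring Murat's interpolation lemma, since $L^1_{\rm loc}\not\hookrightarrow H^{-1}_{\rm loc}$ in two dimensions) and your reading of the paper's ``$W^{1,-r}$'' as $W^{-1,r}$ are both consistent with the cited sources.
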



In summary, in order to establish $C^{1,1}$ isometric immersions for  negatively curved surfaces, it suffices to find the aforementioned invariant regions, {\it i.e.}, the squares $Q$. This is what we shall do in the subsequent sections for several special classes of metrics. Before carrying out this project, let us single out a simple identity central to the later developments: 

\subsection{A lemma on the derivatives of $\kappa$ and $\gamma$}
The following result enables us to simplify the expressions of the normalised Christoffel symbols $\tg^i_{jk}$ in Eq.\,\eqref{tilde Gamma} for special classes of metrics:
\begin{lemma}\label{lemma: derivatives of kappa and gamma}
Let $\kappa<0$ be the Gauss curvature of $(\M, g)$ and $\gamma:=\sqrt{-\kappa}$ as in Eq.\,\eqref{def gamma}. Then
\begin{equation}
\frac{\na \kappa}{2\kappa} = \frac{\na\gamma}{\gamma}.
\end{equation}
\end{lemma}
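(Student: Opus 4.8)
The plan is to prove the identity pointwise by direct computation from the definition $\gamma := \sqrt{-\kappa}$, using only the chain rule for differentiation. Since $\kappa < 0$ on $\M$, the function $-\kappa$ is strictly positive, so $\gamma = (-\kappa)^{1/2}$ is well-defined and (given the $C^{1,1}$ hypothesis on $\gamma$, equivalently a suitable regularity on $\kappa$) differentiable in the appropriate sense; the identity then just needs to be verified at each point, or almost everywhere, and the distributional version follows.

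First I would write $\gamma^2 = -\kappa$ and differentiate both sides with the connection $\nabla$ (which on scalar functions is just the gradient/exterior derivative), obtaining $2\gamma\,\nabla\gamma = -\nabla\kappa$. Dividing through by $2\gamma^2 = -2\kappa$ (legitimate since $\kappa \neq 0$) gives
\begin{equation*}
\frac{\nabla\gamma}{\gamma} = \frac{-\nabla\kappa}{2\gamma^2} = \frac{-\nabla\kappa}{-2\kappa} = \frac{\nabla\kappa}{2\kappa},
\end{equation*}
which is exactly the claimed identity. In coordinates this is the familiar statement that $\nabla \log \gamma = \tfrac{1}{2}\nabla \log(-\kappa)$, i.e. the logarithmic derivative of $\sqrt{-\kappa}$ is half that of $-\kappa$.

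There is essentially no obstacle here: the only point worth a sentence is the regularity bookkeeping. The hypothesis is that $\gamma \in C^{1,1}$, hence $\gamma$ is differentiable with locally Lipschitz derivative, and $-\kappa = \gamma^2 \in C^{1,1}$ as well; the product/chain rule used above is valid classically wherever $\gamma$ is $C^1$ and, more generally, holds almost everywhere and in the sense of distributions. Since both sides of the asserted equation are $L^\infty_{\mathrm{loc}}$ one-forms that agree a.e., they agree as distributions, which is all that is needed for the later manipulations of the normalised Christoffel symbols $\tg^i_{jk}$ in Eq.\,\eqref{tilde Gamma}. If one prefers to avoid even this mild discussion, one can simply note that the identity is an algebraic consequence of $\gamma^2 = -\kappa$ that holds at every point where $\kappa$ is differentiable, and invoke it in that pointwise form throughout the paper.
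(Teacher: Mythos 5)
Your proof is correct and is essentially the paper's argument: the paper writes the identity as $\na\gamma/\gamma = \na\log\gamma = \tfrac12\na\log(-\kappa) = \na\kappa/(2\kappa)$, which is the same one-line chain-rule computation you perform by differentiating $\gamma^2=-\kappa$. Your extra remarks on the $C^{1,1}$ regularity and the a.e./distributional validity are fine but not needed beyond what the paper already assumes.
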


\begin{proof}
This is immediate from 
$$\frac{\na\gamma}{\gamma} = \na \log \gamma = \na \log \sqrt{-\kappa}= \frac{\na \log (-\kappa)}{2}  = \frac{\na\kappa}{2\kappa}.$$  \end{proof}

\section{The Standard Hyperbolic Plane}

In this section we establish the $C^{1,1}$ isometric immersions of the standard hyperbolic plane $\mathbb{H}^2 =\{(x,y)\in\R^2: y>0\}$, also known as the Lobachevsky plane. 

Recall that $\mathbb{H}^2$ is a Lie group when one identifies $(x,y)\in\mathbb{H}^2$ with the proper affine functions $g_{(x,y)}:\R\map\R$, $t \mapsto yt+x$, where the group action is the composition of functions ({\it cf}. Chapter 1 in do Carmo \cite{docarmo}). The unique left-invariant metric $g$ on $\mathbb{H}^2$ satisfying $g_{(0,1)} = \geu$ is the standard Lobachevsky metric:
\begin{equation}\label{metric for H2}
g = \begin{bmatrix}
y^{-2} & 0\\
0 & y^{-2}
\end{bmatrix}, \qquad y > 0.
\end{equation}
Its Christoffel symbols are
\begin{equation}
\G^1_{11} = \G^{2}_{12} = \G^1_{22} =0, \quad \G^2_{11}=-\G^1_{12}=-\G^2_{22}=\frac{1}{y}.
\end{equation}
It is well-known that $\mathbb{H}^2$ has constant Gauss curvature $\kappa \equiv -1$; hence $\gamma = -\kappa^2 =1$ and
\begin{equation}
\G^{i}_{jk} = \tg^{i}_{jk} \qquad \text{ for all } i,j,k \in \{1,2\}. 
\end{equation}
Therefore, substituting into Eqs.\,\eqref{S1}\eqref{S2}, we get
\begin{eqnarray}
&&\sone = \frac{1}{y}(u-v)\Big(1+(u+v)^2\Big),\\
&&\stwo = \frac{1}{y}(u+v) \Big(1+(u-v)^2\Big).
\end{eqnarray}
Notice that $y>0$ and the zero loci of $\sone$, $\stwo$ are $\{u=v\}$ and $\{u=-v\}$, respectively. By Proposition \ref{propn: inv regions VIP}, the square $Q$ with the vertices 
$$
\bullet_t = (0, 2a), \quad \bullet_r = (a,a), \quad \bullet_b = (0,0), \quad \bullet_l = (-a,a)
$$
is an invariant region for Eq.\,\eqref{system of balance law for w,z}, where $a>0$ is an arbitrary positive constant. $Q$ can be equivalently characterised by 
\begin{equation}\label{Q for H2}
Q = \{(u,v) \in \R^2: 0 \leq u+v=w \leq 2a, \, -2a \leq u-v=z \leq 0\}. 
\end{equation}
Therefore, we can deduce:
\begin{theorem}\label{thm: H2}
Let $y_0>0$ be arbitrary and denote the initial data by
\begin{equation*}
(u,v)|_{y=y_0} =: (u_0, v_0): \R \map \R^2.
\end{equation*}
Assume that $u_0 \pm v_0$ is bounded on $\R$ and 
\begin{equation}
\inf_{\R} \,(u_0+v_0)>0,\qquad \sup_{\R} \,(u_0-v_0) <0.
\end{equation}
Then there exists an $L^\infty$ weak solution to the Gauss--Codazzi equations \eqref{gauss eq} and \eqref{codazzi eqs} in the half space $\Omega = \R \times (y_0, \infty)$ for the standard metric \eqref{metric for H2} on $\mathbb{H}^2$. Moreover, restricted to the domain $\Omega$, $\mathbb{H}^2$ admits a $C^{1,1}$ isometric immersion into $\R^3$.
\end{theorem}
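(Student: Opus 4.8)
The plan is to translate the analytic statement into the geometric picture already set up in \S 3.1: by Proposition \ref{propn: chen-li}, it suffices to produce, for the standard metric \eqref{metric for H2}, a family of approximate (viscous) solutions $\{L^\e, M^\e, N^\e\}$ to the Gauss--Codazzi equations on $\Omega=\R\times(y_0,\infty)$ with $L^\infty$ bounds uniform in $\e$, together with the required weak-continuity of the error terms. The uniform $L^\infty$ bound is exactly what an invariant region for the parabolic system \eqref{system of balance law for w,z} delivers, so the heart of the matter is to exhibit such a region adapted to the initial data.

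First I would record the special features of $\mathbb{H}^2$: $\kappa\equiv-1$, hence $\gamma\equiv 1$ and $\tg^i_{jk}=\G^i_{jk}$, and the only nonzero Christoffel symbols are $\G^2_{11}=-\G^1_{12}=-\G^2_{22}=1/y$. Plugging these into the source formulae \eqref{S1}\eqref{S2} collapses $\sone$ and $\stwo$ to the clean expressions $\sone=\frac1y(u-v)\bigl(1+(u+v)^2\bigr)$ and $\stwo=\frac1y(u+v)\bigl(1+(u-v)^2\bigr)$, whose zero loci in the $(u,v)$-plane are the lines $\{u=v\}$ and $\{u=-v\}$ (the quadratic factors being strictly positive). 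Since $w=u+v$ and $z=u-v$, on the edge $e_1=\{w=2a\}$ we have $\sone$ with the sign of $z\le 0$, on $e_3=\{w=0\}$ the sign of $z\ge 0$, on $e_2=\{z=-2a\}$ the term $\stwo$ has the sign of $w\ge 0$, and on $e_4=\{z=0\}$ the sign of $w\le 0$; this is precisely the sign pattern demanded by Proposition \ref{propn: inv regions VIP}, using that $y>y_0>0$ throughout $\Omega$. Hence the square $Q$ of \eqref{Q for H2} is an invariant region for every $\e>0$.

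Next I would set up the viscous problem \eqref{wz 1}\eqref{wz 2} (equivalently \eqref{system of balance law for w,z}) on $\Omega$ with the given initial data at $y=y_0$; the hypotheses $\inf_\R(u_0+v_0)>0$ and $\sup_\R(u_0-v_0)<0$ together with the boundedness of $u_0\pm v_0$ let us choose $a>0$ large enough that $(u_0,v_0)$ takes values in the interior of $Q$ (after, if needed, also imposing $w_0,z_0$ bounded away from the degenerate edges, which is automatic from the strict inequalities). Standard parabolic theory gives local-in-$y$ existence of smooth $(w^\e,z^\e)$, and the invariant-region property propagates the bound $(u^\e,v^\e)\in Q$ for all $y>y_0$, which both yields the uniform $L^\infty$ estimate and, since $Q$ stays away from $\{v=0\}$, prevents $\rho^\e=1/v^\e$ from blowing up — so the solution is global on $\Omega$. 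Translating back, $\{\tl^\e,\tm^\e,\tn^\e\}$ and hence $\{L^\e,M^\e,N^\e\}=\{\tl^\e,\tm^\e,\tn^\e\}$ (as $\gamma\equiv1$) are uniformly bounded in $L^\infty_{\rm loc}$, and the viscous terms $\e(\cdot)_{xx}$ furnish errors $O_i(\e)\to 0$ in $W^{-1,r}_{\rm loc}$ of the type required in Proposition \ref{propn: chen-li}. Applying that proposition produces a weak solution $\{\overline L,\overline M,\overline N\}$ of \eqref{gauss eq}\eqref{codazzi eqs} on $\Omega$ and a $C^{1,1}$ isometric immersion of $(\Omega,g)$ into $\R^3$ realising it, which is the assertion.

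I expect the only genuinely delicate point to be the justification of the vanishing-viscosity limit, i.e. verifying the compensated-compactness hypotheses of Proposition \ref{propn: chen-li} — in particular that the error terms converge in the negative Sobolev space claimed and that the uniform $L^\infty$ bound plus the structure of the flux $\FF$ suffice to pass to the limit on the nonlinear terms; but this is exactly the machinery already imported in \S 3.2 from Chen--Li \cite{cl1} and used in \cite{cswc,chw1,chw2}, so for the present metric it is a citation rather than new work. The genuinely new content is the elementary observation that for $\mathbb{H}^2$ the source terms factor so transparently that an arbitrarily large square centred appropriately on the $v$-axis works, which is why no smallness or BV restriction on the data is needed beyond the two strict sign conditions; checking the boundary signs on each of the four edges is the one computation I would actually carry out in full.
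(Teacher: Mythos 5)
Your overall strategy --- verify the four sign conditions of Proposition \ref{propn: inv regions VIP} for the square \eqref{Q for H2}, propagate the resulting uniform $L^\infty$ bound through the viscous system, and conclude via Proposition \ref{propn: chen-li} --- is the same route the paper takes, and the reduction of $\sone,\stwo$ to $\frac{1}{y}(u-v)(1+(u+v)^2)$ and $\frac{1}{y}(u+v)(1+(u-v)^2)$ is correct. The gap is in the one computation you yourself identify as the real content: the edge-by-edge sign check. On $Q=\{0\le w\le 2a,\ -2a\le z\le 0\}$ we have $\sone=\frac{1}{y}z(1+w^2)$ and $\stwo=\frac{1}{y}w(1+z^2)$. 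You assert that on $e_3=\{w=0\}$ one has $z\ge 0$, and that on the edge $\{z=0\}$ one has $w\le 0$; but every point of $Q$ satisfies $0\le w\le 2a$ and $-2a\le z\le 0$, so on $e_3$ in fact $z\in[-2a,0]$ and on $\{z=0\}$ in fact $w\in[0,2a]$. Hence $\sone\le 0$ on $e_3$, where Proposition \ref{propn: inv regions VIP} requires $\sone\ge 0$, and $\stwo\ge 0$ on the maximal-$z$ edge, where it requires $\stwo\le 0$: the source field points \emph{out} of $Q$ on two of the four edges. Concretely, at the boundary point $(w,z)=(a,0)$ the reaction term gives $z_y=\stwo=a/y>0$, so the associated ODE flow exits $Q$ immediately, and the square is not invariant for \eqref{system of balance law for w,z}.

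This also cannot be fixed by repositioning the square. Since $(\sone,\stwo)\propto(z(1+w^2),\,w(1+z^2))$ has a saddle at the origin with unstable direction along $w=z$, the two requirements ``$z\le 0$ on the maximal-$w$ edge'' and ``$z\ge 0$ on the minimal-$w$ edge'' are simultaneously satisfiable only if the square degenerates, because both edges sweep the full $z$-range of $Q$ (and symmetrically for the $z$-edges). So no nondegenerate square of the form \eqref{G_i} satisfies the criterion of Proposition \ref{propn: inv regions VIP} for these source terms, and the uniform $L^\infty$ estimate --- hence the entire vanishing-viscosity argument --- does not follow as written. A secondary issue: even granting an invariant region, your claim that $Q$ ``stays away from $\{v=0\}$'' is false, since the bottom vertex of $Q$ is $(u,v)=(0,0)$; the passage from bounds on $(u,v)$ to bounds on $(L,M,N)=(1/v,\,-u/v,\,(u^2-v^2)/v)$ requires $v$ bounded away from zero, which this region does not supply. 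A correct proof would need either a genuinely different invariant set (bounded away from $\{v=0\}$ and compatible with the saddle structure of the source) or an entirely different mechanism for the a priori estimate.
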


\begin{proof}
	We first note that the assumptions on $u_0, v_0$ ensure the existence of some point $x_\star \in \R$ such that $\big(u_0(x_\star), v_0(x_{\star})\big) \in Q$ for some $a>0$, where $Q$ is the invariant region described above. Thus we get the uniform $L^\infty$ estimate for $(u,v)$ with the initial data $(u_0, v_0)$ on $\Omega$. On the other hand, we have $(L,M,N) = \big(1/v, -u/v, (u^2-v^2)/v\big)$; so the uniform $L^\infty$ estimate  for $(L,M,N)$ follows, where the superscripts $\e$ has been dropped. Moreover, they satisfy the Gauss--Codazzi equations approximately in the sense of Proposition \ref{propn: chen-li}: this can be seen by adopting verbatim the estimates in Chapter 4 of Cao--Huang--Wang \cite{chw1}. Therefore, the proof is complete by taking the vanishing viscosity limit, in view of Proposition \ref{propn: chen-li}.    \end{proof}

In this above we have established the $C^{1,1}$ isometric immersion of $\mathbb{H}^2$ ``nearly globally'', {\it i.e.}, away from an arbitrarily thin slot $\{(x,y)\in\R^2:0<y<y_0\}$. Thus, various isometric models of $\mathbb{H}^2$, including the Poincar\'{e} disk and the pseudo-sphere, all admit $C^{1,1}$ ``nearly global'' isometric immersions into $\R^3$. In contrast, these models are long known to be not globally $C^2$ embeddable ({\it cf}. Hilbert \cite{hilbert}).

\section{Generalised Helicoid-type metrics}
The goal of this section is two-fold: first, let us prove the existence of isometric immersions of another family of ``generalised helicoid-type'' metrics, which has not been covered by Cao--Huang--Wang \cite{chw1, chw2} and Christoforou \cite{christofourou}; second, we give a further characterisation of the generalised helicoids considered in \cite{chw1, chw2}.

\subsection{Global existence of isometric immersions of some helicoid-type metrics}
As in \cite{chw1, chw2}, let us call the metrics
\begin{equation}
g=\begin{bmatrix}
E(y) & 0 \\
0& 1
\end{bmatrix}
\end{equation}
of the ``{\em helicoid type}''. When $E$ is a suitable hyperbolic cosine function of $y$ (see \S 5.2 below), $g$ is the metric for the classical helicoid, or the generalised helicoids considered in \cite{chw1, chw2}. In this case, Eq.\,\eqref{Gamma} yields that
\begin{equation}
\G^2_{11} = -\frac{E_y}{2}, \qquad \G^1_{12}=\frac{E_y}{2E}, \qquad \text{ other } \G^i_{jk} = 0.
\end{equation}
Then, in view of Eqs.\,\eqref{tilde Gamma}\eqref{S1} and \eqref{S2}, the source terms can be simplified as follows:
\begin{align}\label{source, generalised helicoid}
&\sone = -\frac{E_y}{E} u + \frac{\gamma_y}{\gamma} v - \frac{E_y}{2E} (u-v)(u+v)^2,\nonumber\\
&\stwo= -\frac{E_y}{E} u - \frac{\gamma_y}{\gamma} v - \frac{E_y}{2E} (u-v)^2(u+v).
\end{align}
Moreover, in this case we can compute the Gauss curvature, {\it e.g.}, via Brioschi's formula (see \cite{asg}) as follows:
\begin{equation}\label{e}
\kappa = -\frac{1}{2\sqrt{E}} \frac{\p}{\p y}\Big(\frac{\p_y E}{\sqrt{E}}\Big) = -\frac{E_{yy}}{2E} + \frac{(E_y)^2}{4E^2}.
\end{equation}
Thus, by Lemma \ref{lemma: derivatives of kappa and gamma}, we have
\begin{equation}\label{f}
\frac{\gamma_y}{y} = \frac{-E^2 E_{yyy}+2EE_yE_{yy}-(E_y)^3}{-2E^2E_{yy}+E(E_y)^2} = \frac{EE_{yyy}}{2EE_{yy} - (E_y)^2} - \frac{E_y}{E}.
\end{equation}

From now on let us specialise to the following particular family of metrics, which has not established by the generalised helicoid-type metrics  in \cite{chw1, chw2, cswb, cswc, christofourou}:
\begin{equation}\label{generalised helicoid}
g= \begin{bmatrix}
Ay^2 + By + C & 0\\
0 & 1
\end{bmatrix}, \qquad A>0 \text{ and }  B^2 - 4AC < 0.
\end{equation}
The conditions on $A,B,C$ ensure that $g$ is indeed a Riemannian metric. In this case, Eq.\,\eqref{e} immediately leads to
\begin{equation}
\kappa = \frac{B^2-4AC}{4(Ay^2 +By +C)} < 0,
\end{equation} 
and Eq.\,\eqref{f} becomes
\begin{equation}
\frac{\gamma_y}{\gamma} = -\frac{E_y}{E}.
\end{equation}
Thus, further simplifications of the source terms in Eq.\,\eqref{source, generalised helicoid} are available:
\begin{align}\label{source simplified, generalised helicoid}
&\sone = -\frac{E_y}{E} \Big\{(u+v) + \frac{(u-v)(u+v)^2}{2} \Big\},\\
&\stwo = -\frac{E_y}{E} \Big\{(u-v) + \frac{(u-v)^2(u+v)}{2} \Big\}.
\end{align}

In the case $E_y/E\geq 0$, {\it i.e.}, if we further require $2Ay+B \geq 0$, the signs of $\sone,\stwo$ are determined by the cubic polynomials $\pone,\ptwo\in\R[u,v]$ ({\it cf.} Fig.\,\eqref{figure helicoid}):
\begin{eqnarray}
&&\pone = -(u+v) - \frac{(u-v)(u+v)^2}{2} = -(u+v)\Big[1+\frac{u^2-v^2}{2}\Big],\\
&&\ptwo = (v-u) -\frac{(u-v)^2(u+v)}{2} = (v-u)\Big[1+\frac{u^2-v^2}{2}\Big].
\end{eqnarray}
For $(u,v)\in[0,1]\times[0,1]$ there holds $1+(u^2-v^2)/2 > 0$; by a similar computation as in \S 4, we find that the square $Q$ with vertices
$$
\bullet_t = (0, 2a), \quad \bullet_r = (a,a), \quad \bullet_b = (0,0), \quad \bullet_l = (-a,a)
$$
 is an invariant region whenever $a$ is an arbitrary constant in $(0,1]$. Using analogous arguments as in the proof of Theorem \ref{thm: H2}, we conclude with the following sufficient conditions for the existence of isometric immersions for the helicoid-type metrics:
\begin{theorem}\label{thm: helicoid}
Let $y_0>0$ be arbitrary and denote the initial data by
\begin{equation*}
(u,v)|_{y=y_0} =: (u_0, v_0): \R \map \R^2.
\end{equation*}
Assume that  
\begin{equation}
0<\inf_{\R} \,(u_0+v_0)\leq \sup_{\R} \,(u_0+v_0)\leq 2,\qquad -2\leq\inf_{\R} \,(u_0-v_0)\leq \sup_{\R} \,(u_0-v_0)<0.
\end{equation}
Then there exists an $L^\infty$ weak solution to the Gauss--Codazzi equations \eqref{gauss eq} and \eqref{codazzi eqs} in the half space $\Omega = \R \times (y_0, \infty)$ for the helicoid-type metric
$$
g= \begin{bmatrix}
Ay^2 + By + C & 0\\
0 & 1
\end{bmatrix} \qquad \text{ where } A>0, \, B \geq 0 \text{ and }  B^2 - 4AC < 0.
$$ 
Moreover, restricted to the domain $\Omega$, $g$ admits a $C^{1,1}$ isometric immersion into $\R^3$.
\end{theorem}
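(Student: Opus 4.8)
The plan is to follow the same strategy used for Theorem~\ref{thm: H2}, specialising the general machinery of \S\S 2--3 to the metric $g = \begin{bmatrix} Ay^2+By+C & 0 \\ 0 & 1 \end{bmatrix}$ with $A>0$, $B\geq 0$, $B^2-4AC<0$, so that $E(y) = Ay^2+By+C$ is a helicoid-type profile. First I would record that this $E$ satisfies $E>0$ everywhere (the discriminant condition), that $\kappa = (B^2-4AC)/(4E) < 0$ as computed in the excerpt, and that $E_y = 2Ay+B \geq 0$ on the half-plane $\{y \geq 0\}$ (using $B\geq 0$, $A>0$, $y>0$), so that $E_y/E \geq 0$ throughout the domain $\Omega = \R\times(y_0,\infty)$. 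The key simplification, already carried out in the excerpt, is that Lemma~\ref{lemma: derivatives of kappa and gamma} forces $\gamma_y/\gamma = -E_y/E$ for this particular $E$, which collapses the source terms to $\sone = -\tfrac{E_y}{E}(u+v)\bigl[1 + \tfrac{u^2-v^2}{2}\bigr]$ and $\stwo = \tfrac{E_y}{E}(v-u)\bigl[1 + \tfrac{u^2-v^2}{2}\bigr]$, i.e.\ $\sone = \tfrac{E_y}{E}\pone$ and $\stwo = \tfrac{E_y}{E}\ptwo$ in the notation of the excerpt.

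Next I would verify the sign conditions of Proposition~\ref{propn: inv regions VIP} on the square $Q$ with vertices $\bullet_t=(0,2a)$, $\bullet_r=(a,a)$, $\bullet_b=(0,0)$, $\bullet_l=(-a,a)$ for $a\in(0,1]$. The crucial observation is that $Q\subset[-1,1]\times[0,1]$ when $a\leq 1$, and on this set $1 + \tfrac{u^2-v^2}{2} > 0$ (indeed $u^2-v^2 \geq -1 > -2$), so the sign of $\sone$ is that of $-(u+v) = -w$ and the sign of $\stwo$ is that of $v-u = -z$. On the edge $e_1 = \{w = 2a\}$ we have $w>0$ hence $\sone \leq 0$; on $e_3 = \{w=0\}$ we have $\sone \geq 0$; on $e_2 = \{z=0\}$ we have $\stwo = 0 \leq 0$; on $e_4 = \{z = -2a\}$ we have $z<0$ hence $\stwo\geq 0$. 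Combined with the nonnegativity of the prefactor $E_y/E$ on $\Omega$, all four conditions of Proposition~\ref{propn: inv regions VIP} are met, and Conditions (1)(2) of Proposition~\ref{propn: inv region} hold automatically for the linear choices $G_i$ in \eqref{G_i} exactly as noted in \S 3.1. Hence $Q$ is an invariant region for the parabolic system \eqref{system of balance law for w,z} for every $\e>0$.

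With the invariant region in hand the remainder is a routine repetition of the argument for Theorem~\ref{thm: H2}. The hypotheses $0 < \inf_\R(u_0+v_0) \leq \sup_\R(u_0+v_0) \leq 2$ and $-2 \leq \inf_\R(u_0-v_0) \leq \sup_\R(u_0-v_0) < 0$ guarantee that one may choose $a\in(0,1]$ with $(u_0(x),v_0(x))\in Q$ for all $x$ (take $2a = \sup_\R w_0 = \sup_\R(u_0+v_0)$ if this is positive and $\leq 2$, shrinking if needed so that also $-2a \leq \inf_\R z_0$; the two-sided bounds make such a common $a$ available). The invariance of $Q$ then yields a uniform $L^\infty$ bound on $(w^\e,z^\e)$, hence on $(u^\e,v^\e)$, hence on $(L^\e,M^\e,N^\e) = (1/v^\e,\,-u^\e/v^\e,\,((u^\e)^2-(v^\e)^2)/v^\e)$, noting $v^\e$ stays bounded away from zero inside $Q$. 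One checks that these approximately satisfy the Gauss--Codazzi equations in the sense of Proposition~\ref{propn: chen-li} by the entropy/compensated-compactness estimates of Cao--Huang--Wang~\cite{chw1} applied verbatim, the only input being the $C^{1,1}$ bounds on the (normalised) Christoffel symbols, which hold since $E$ is a positive polynomial on $\{y\geq y_0\}$. Passing to the vanishing viscosity limit via Proposition~\ref{propn: chen-li} then produces the $L^\infty$ weak solution on $\Omega$ and the desired $C^{1,1}$ isometric immersion, completing the proof.

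The main obstacle — modest here, but the one point requiring care — is confirming that the constraint $a\leq 1$ forced by the sign analysis is compatible with fitting $Q$ around the given initial data; this is precisely why the theorem imposes the upper bound $2$ on $\sup_\R(u_0+v_0)$ and the lower bound $-2$ on $\inf_\R(u_0-v_0)$, rather than merely the open sign conditions used for $\mathbb{H}^2$ (where the prefactor-free cubic $1+(u+v)^2$ is positive on all of $\R^2$ and so $a$ could be arbitrary). One should also double-check that $E_y/E\geq 0$ genuinely holds on the whole domain $\Omega=\R\times(y_0,\infty)$ and not just asymptotically — which it does, since $E_y = 2Ay+B$ with $A>0$, $B\geq 0$, $y>y_0>0$ gives $E_y>0$ outright.
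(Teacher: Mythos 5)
Your proposal follows the paper's own proof of Theorem \ref{thm: helicoid} step for step: the identity $\gamma_y/\gamma=-E_y/E$, the factorisations $\sone=\frac{E_y}{E}\pone$ and $\stwo=\frac{E_y}{E}\ptwo$ with $\pone=-(u+v)\big[1+\frac{(u+v)(u-v)}{2}\big]$ and $\ptwo=(v-u)\big[1+\frac{(u+v)(u-v)}{2}\big]$, the positivity of $E_y/E=(2Ay+B)/E$ on $\Omega$, the edge-by-edge check of Proposition \ref{propn: inv regions VIP} on the square $Q$, and the vanishing-viscosity limit via Proposition \ref{propn: chen-li} are all exactly the paper's argument.

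There is, however, one concrete quantitative gap in the invariant-region step, and it is worth flagging even though the paper's own text commits essentially the same slip. The square $Q$ with vertices $(0,2a)$, $(a,a)$, $(0,0)$, $(-a,a)$ is contained in $[-a,a]\times[0,2a]$, not in $[-1,1]\times[0,1]$ as you assert; its top vertex sits at height $2a$. Consequently the bracket $1+\frac{(u+v)(u-v)}{2}=1+\frac{wz}{2}$ is not positive on all of $Q$ for every $a\leq 1$: at the top vertex it equals $1-2a^2$, which is negative for $a>1/\sqrt{2}$. Since that vertex lies on both $e_1$ and $e_4$, the sign conditions of Proposition \ref{propn: inv regions VIP} genuinely fail there for $a>1/\sqrt{2}$ (on $e_1$ one has $\pone=-2a(1+az)$ with $z\in[-2a,0]$, which becomes positive near $z=-2a$). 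The verification goes through precisely for $a\in(0,1/\sqrt{2}]$, where $1+az\geq 1-2a^2\geq 0$ on $e_1$ and $1-aw\geq 0$ on $e_4$; correspondingly the bounds on the initial data should read $\sqrt{2}$ rather than $2$. A second, minor point: to fit the data inside $Q$ you need $2a\geq\max\{\sup_\R(u_0+v_0),\,-\inf_\R(u_0-v_0)\}$, so $a$ must be \emph{enlarged}, not ``shrunk,'' to accommodate $\inf_\R z_0$ --- and it is exactly the tension between this lower bound on $a$ and the upper bound $a\leq 1/\sqrt{2}$ that dictates the admissible size of the initial data.
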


On the other hand, in the case $E_y/E \leq 0$, {\it i.e.}, $2Ay+B \leq 0$, the square $Q$ with vertices $\bullet_t = (0,a')$ for $a'>\sqrt{2}$, $\bullet_b = (0,a'')$ for $0\leq a''<\sqrt{2}$, and the other two vertices $\bullet_l, \bullet_r$ lying on the upper branch of the hyperbola $\{(u,v):u^2-v^2+2=0\}$ is an invariant region. Surprisingly, the computation here coincides with that for the ``catenoid-type surfaces'' in \S 3.1 of Cao--Huang--Wang \cite{chw1}; see Eq.\,(3.7) on p1439 therein, with the choice $c=\sqrt{2}$. Therefore, 


\begin{theorem}\label{thm: helicoid 2}
Let $y_0>0$ be arbitrary and denote the initial data by
\begin{equation*}
(u,v)|_{y=y_0} =: (u_0, v_0): \R \map \R^2.
\end{equation*}
Assume that  $u_0 \pm v_0$ is bounded on $\R$, and
\begin{equation}
 \sup_{\R} \,(u_0+v_0)>0,\qquad \inf_{\R} \,(u_0-v_0)<0.
\end{equation}
Then there exists an $L^\infty$ weak solution to the Gauss--Codazzi equations \eqref{gauss eq} and \eqref{codazzi eqs} in the strip $\Omega = \R \times [y_0, -B/2A]$ for the helicoid-type metric
$$
g= \begin{bmatrix}
Ay^2 + By + C & 0\\
0 & 1
\end{bmatrix} \qquad \text{ where } A>0, \, B \leq 0 \text{ and }  B^2 - 4AC < 0.
$$ 
Moreover, restricted to the domain $\Omega$, $g$ admits a $C^{1,1}$ isometric immersion into $\R^3$.
\end{theorem}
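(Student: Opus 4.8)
The plan is to follow the template already set up by Theorem~\ref{thm: H2} and the discussion immediately preceding this statement: reduce the existence of a $C^{1,1}$ isometric immersion to the existence of an invariant region $Q$ for the parabolically regularised system \eqref{system of balance law for w,z}, then invoke Proposition~\ref{propn: chen-li} to pass to the vanishing viscosity limit. The key structural input, already recorded above, is that for the metric $E(y)=Ay^2+By+C$ one has $\gamma_y/\gamma = -E_y/E$, so the source terms collapse to the especially simple cubic form \eqref{source simplified, generalised helicoid}. In the regime $B\le 0$ and $y_0\le y\le -B/(2A)$ we have $2Ay+B=E_y(y)\le 0$, hence $E_y/E\le 0$ on the strip $\Omega$, and the sign pattern of $\sone,\stwo$ along the four edges of a candidate square $Q$ is governed by the algebraic curve $\{u^2-v^2+2=0\}$ exactly as in the catenoid-type analysis of Cao--Huang--Wang \cite{chw1}.

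First I would make precise the choice of $Q$: take the square with top vertex $\bullet_t=(0,a')$ with $a'>\sqrt2$, bottom vertex $\bullet_b=(0,a'')$ with $0\le a''<\sqrt2$, and the two lateral vertices $\bullet_l,\bullet_r$ forced onto the upper branch of the hyperbola $u^2-v^2+2=0$ (so that the sides meet the axes at angle $\pi/4$, as required by \eqref{G_i}). Then I would verify, using Proposition~\ref{propn: inv regions VIP}, the four sign conditions: $\sone\le 0$ on $e_1$, $\stwo\le 0$ on $e_2$, $\sone\ge 0$ on $e_3$, $\stwo\ge 0$ on $e_4$. From \eqref{source simplified, generalised helicoid}, $\sone=-(E_y/E)(u+v)\bigl[1+(u^2-v^2)/2\bigr]$ and $\stwo=-(E_y/E)(u-v)\bigl[1+(u^2-v^2)/2\bigr]$; since $-E_y/E\ge 0$ on $\Omega$, the signs are dictated by the signs of $w=u+v$, $z=u-v$ and of $1+(u^2-v^2)/2$, and the geometric placement of $Q$ relative to $\{w=0\}$, $\{z=0\}$ and the hyperbola $\{1+(u^2-v^2)/2=0\}$ makes all four conditions hold — this is the computation that, as the excerpt notes, literally coincides with Eq.~(3.7) of \cite{chw1} with $c=\sqrt2$. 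Because $E_y/E$ is continuous and bounded on the compact strip $\Omega$, the bound is uniform in $y\in[y_0,-B/(2A)]$.

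Next, the hypotheses on the initial data — $u_0\pm v_0$ bounded, $\sup_\R(u_0+v_0)>0$, $\inf_\R(u_0-v_0)<0$ — guarantee, by an intermediate-value argument on the line $y=y_0$, that there is a point $x_\star$ with $(u_0(x_\star),v_0(x_\star))$ lying in some such square $Q$ (one is free to enlarge $a'$ and shrink $a''$). Invariance of $Q$ for the regularised system \eqref{wz 1}--\eqref{wz 2} for every $\e>0$ then yields a uniform $L^\infty$ bound on $(w^\e,z^\e)$, hence on $(u^\e,v^\e)$, hence on $(L^\e,M^\e,N^\e)=(1/v^\e,-u^\e/v^\e,((u^\e)^2-(v^\e)^2)/v^\e)$ on $\Omega$, provided one also controls $v^\e$ away from zero — which follows since $v=1/\rho$ and the square $Q$ stays in the region $v>0$ bounded away from $0$. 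The compensated-compactness machinery (the estimates of Chapter~4 of \cite{chw1}, applicable verbatim here) shows $\{L^\e,M^\e,N^\e\}$ approximately solves the Gauss--Codazzi system in the sense of Proposition~\ref{propn: chen-li}; taking the vanishing viscosity limit produces an $L^\infty$ weak solution on $\Omega=\R\times[y_0,-B/(2A)]$ and, by the same proposition, a $C^{1,1}$ isometric immersion of $(\Omega,g)$ into $\R^3$.

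The main obstacle is the edge-by-edge sign verification for $Q$ in this $B\le 0$ regime: unlike the hyperbolic-plane case where the zero loci $\{u=v\}$, $\{u=-v\}$ are straight lines and a square with vertices on the axes works trivially, here the relevant zero locus includes the hyperbola $u^2-v^2+2=0$, so one must check that the lateral edges $e_l,e_r$ can be simultaneously fitted to lie on the correct side of (and with endpoints on) this curve while the horizontal extent of $Q$ is still compatible with $w\ge 0$ on the upper half and $z\le 0$ on the lower half of $\p Q$. This is precisely the geometric puzzle resolved in \cite{chw1}, so once the coincidence of the source terms with their catenoid-type expression is established — which the excerpt has already done — the remaining work is bookkeeping rather than a genuinely new difficulty; the only extra care needed is restricting to the strip $y\le -B/(2A)$ so that $E_y\le 0$ throughout, which is why the domain in the statement is a bounded strip rather than a half-plane.
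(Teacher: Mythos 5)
Your proposal is correct and follows essentially the same route as the paper: for $y\in[y_0,-B/(2A)]$ one has $E_y/E\le 0$, the source terms factor as $\sone=-(E_y/E)(u+v)\bigl[1+\tfrac{u^2-v^2}{2}\bigr]$, $\stwo=-(E_y/E)(u-v)\bigl[1+\tfrac{u^2-v^2}{2}\bigr]$, the square with $\bullet_t=(0,a')$, $\bullet_b=(0,a'')$ and lateral vertices on the upper branch of $u^2-v^2+2=0$ (forcing $a'a''=2$) is the invariant region coinciding with the catenoid-type computation of \cite{chw1}, and the conclusion then follows by the vanishing-viscosity argument of Theorem \ref{thm: H2} via Proposition \ref{propn: chen-li}. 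Your added remarks (the edge-by-edge sign check against the hyperbola, and the lower bound $v\ge a''>0$ on $Q$ ensuring control of $L=1/v$) only make explicit what the paper leaves to the citation of \cite{chw1}.
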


\begin{figure}[h]
    \centering
\includegraphics[scale=0.5]{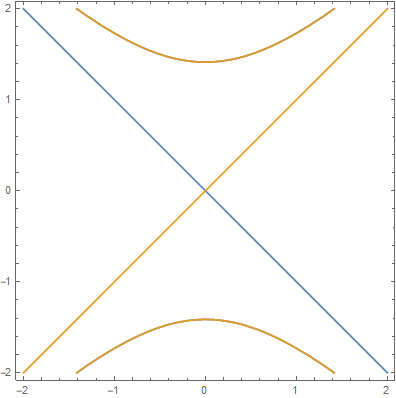}
    \caption{The zero loci for $\pone$ (blue) and $\ptwo$ (orange)}
    \label{figure helicoid}
\end{figure}

\subsection{Absence of isometric deformation in $\R^3$ from generalised catenoids to generalised helicoids}

In this subsection we provide further descriptions of the $C^{1,1}$ isometrically immersed generalised helicoids found by Cao--Huang--Wang in \cite{chw1, chw2}. As is well-known in classical differential geometry ({\it e.g.}, Abbena--Salamon--Gray \cite{asg}), the standard helicoids and standard catenoids, both immersed in $\R^3$, can be {\em isometrically deformed} into each other in $\R^3$ --- there exists a one-parameter family of differentiable immersions $\{\Phi_\tau:\R^2 \map \R^3 :0\leq \tau \leq \pi/2\}$ that varies smoothly in $\tau$, in which $\Phi_0$ is an isometric immersion for the standard helicoid, $\Phi_{\pi/2}$ is that for the standard catenoid, and the induced metrics by $\Phi_\tau$ are constant in $\tau$. Thus, if we know that $\Phi_{\pi/2}$ is an isometric immersion for the standard catenoid, and the family $\{\Phi_\tau\}$ is an isometric deformation, then we can immediately deduce that the standard helicoid is isometrically immersible  in $\R^3$.

Now, recall that in Chen--Slemrod--Wang \cite{cswb}, Cao--Huang--Wang \cite{chw1} and Christoforou \cite{christofourou} (the ranges of parameters in these works differ slightly) it has been proved that the metrics of the  ``generalised catenoid-type'' can be $C^{1,1}$ isometrically immersed in $\R^3$. As a subset of such ``generalised catenoid-type'' metrics, the following metrics
\begin{equation}\label{g cat beta}
g^\beta_{\rm cat}= \begin{bmatrix}
E(y) = \Big(c \cosh\big(\frac{y}{c}\big)\Big)^{\frac{2}{\beta^2-1}} &0\\
0& \frac{1}{c^2(\beta^2-1)^2}E(y)
\end{bmatrix}, \qquad c \neq 0, \,\, \beta \geq \sqrt{2}
\end{equation}
admit isometric immersions $f_{\rm cat}$ on $\R \times [-y_0, 0]$ for some $y_0 > 0$:
\begin{align}\label{f cat beta}
&f^\beta_{\rm cat} (x,y) \nonumber\\
=\,& \bigg(\Big(c\cosh\big(\frac{y}{c}\big)\Big)^{\frac{1}{\beta^2-1}}\sin x,  \Big(c\cosh\big(\frac{y}{c}\big)\Big)^{\frac{1}{\beta^2-1}}\cos x, \int^y \frac{1}{\beta^2-1}\Big(c\cosh\big(\frac{s}{c}\big)\Big)^{\frac{1}{\beta^2-1}-1} \,\dd s \bigg)^\top.
\end{align}
Notice that $\beta = \sqrt{2}$ corresponds to the standard catenoid; in the sequel, the metric and the standard parametrisation are denoted as $g_{\rm std\,\, cat}$ and $f_{\rm std\,\, cat}$, respectively:
\begin{equation}\label{g std cat beta}
g_{\rm std \,\, cat}=g^{\sqrt{2}}_{\rm cat}=\begin{bmatrix}
c^2\cosh\big(\frac{y}{c}\big)^2 &0\\
0& \frac{1}{c^2}\cosh\big(\frac{y}{c}\big)^2 
\end{bmatrix},
\end{equation}
\begin{equation}\label{f std cat beta}
f_{\rm std \,\, cat}(x,y) = f^{\sqrt{2}}_{\rm cat}(x,y) = \bigg(c\cosh\big(\frac{y}{c}\big)\sin x, c\cosh\big(\frac{y}{c}\big)\cos x ,y\bigg)^\top.
\end{equation}

On the other hand, as motivated by the case of classical helicoid and catenoid, a natural parametrisation of the ``generalised helicoids'' is as follows:
\begin{equation}\label{f a psi helicoid}
f^{\alpha, \psi}_{\rm hel}(x,y)=\bigg(\Big(c\sinh\big(\frac{y}{c}\big)\Big)^\alpha\sin x,-\Big(c\sinh\big(\frac{y}{c}\big)\Big)^\alpha \cos x, \psi(x) \bigg)^\top,
\end{equation}
where $\psi$ is a function in $x$ only, and $\alpha >0$ is a constant. Let us call $g^{\alpha, {\rm id}}_{\rm hel}$ the metrics of the ``generalised helicoid-type'' induced by $f^{\alpha, \psi}_{\rm hel}$, which form a subset of the metrics considered in \cite{chw2}:
\begin{equation}\label{g a psi helicoid}
g^{\alpha, \psi}_{\rm hel} := \Big\{f^{\alpha, \psi}_{\rm hel}\Big\}^*\geu.
\end{equation}
Here $\geu$ is the Euclidean metric on $\R^3$, and the superscript ${}^\ast$ denotes the pullback operator. The special case $\alpha=1, \psi=c\,{\rm id}$ gives the parametrisation of the standard helicoid (modulo the transform $y \mapsto c\sinh (yc^{-1})$):
\begin{equation}\label{f std psi helicoid}
f_{\rm std \,\, hel} (x,y) = f^{1, c\,{\rm id}}_{\rm hel} (x,y) = \bigg(c\sinh\big(\frac{y}{c}\big)\sin x,-c\sinh\big(\frac{y}{c}\big) \cos x, cx \bigg)^\top.
\end{equation}
Let us also denote by $g_{\rm std \,\, hel}$ the metric of the standard helicoid, namely
\begin{equation}
g_{\rm std \,\, hel} := \big\{f_{\rm std \,\, hel} \big\}^*\geu.
\end{equation}
Moreover, the family $\{\Phi_\tau: 0 \leq \tau \leq \frac{\pi}{2}\}$ given by
\begin{equation}
\Phi_\tau(x,y):= \sin \tau\cdot f_{\rm std \,\, hel}(x,y) + \cos \tau\cdot f_{\rm std\,\,cat}(x,y)
\end{equation}
is the desired family of isometric deformations. 

In what follows, we show that the generalised catenoids $g^\beta_{\rm cat}$ cannot be naturally isometrically deformed into $g^{\alpha, \psi}_{\rm hel}$, unless both $g^\beta_{\rm cat}=g_{\rm std\,\,cat}$ and $g^{\alpha, \psi}_{\rm hel}=g_{\rm std\,\,hel}$:

\begin{proposition}\label{propn: no deformation}
Let $\{\Phi_\tau: \R^2 \map \R^3: 0 \leq \tau \leq \pi/2\}$ be the natural one-parameter family of parametrisations 
\begin{equation*}
\Phi_\tau := \sin \tau \cdot f^{\alpha, \psi}_{\rm hel} + \cos \tau \cdot f^\beta_{\rm cat},
\end{equation*}
where $f^{\alpha, \psi}_{\rm hel}$ and $f^\beta_{\rm cat}$ are defined as in Eqs.\,\eqref{f a psi helicoid} and \eqref{f cat beta}. Then $\Phi_\tau$ is an isometric deformation from the generalised catenoid to the generalised helicoid if and only if
\begin{equation*}
\alpha =1,\qquad \psi =c\,{\rm Id}\qquad \text{ and }\beta = \sqrt{2}.
\end{equation*}
\end{proposition}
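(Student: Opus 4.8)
The plan is to work straight from the definition: $\{\Phi_\tau\}_{0\le\tau\le\pi/2}$ is an isometric deformation from $f^\beta_{\rm cat}$ (realised at $\tau=0$) to $f^{\alpha,\psi}_{\rm hel}$ (realised at $\tau=\pi/2$) precisely when the pulled-back metric $\Phi_\tau^*\geu$ is independent of $\tau$, each $\Phi_\tau$ being automatically an immersion once this common metric is seen to be positive definite. As $\Phi_\tau$ is an affine combination of the two maps, $\p_i\Phi_\tau=\sin\tau\,\p_i f^{\alpha,\psi}_{\rm hel}+\cos\tau\,\p_i f^\beta_{\rm cat}$, whence
\begin{equation*}
\Phi_\tau^*\geu=\sin^2\tau\,g^{\alpha,\psi}_{\rm hel}+\cos^2\tau\,g^\beta_{\rm cat}+2\sin\tau\cos\tau\,\TT ,
\end{equation*}
with $\TT$ the symmetric $2$-tensor $\TT_{ij}:=\tfrac12\big(\langle\p_i f^{\alpha,\psi}_{\rm hel},\p_j f^\beta_{\rm cat}\rangle+\langle\p_j f^{\alpha,\psi}_{\rm hel},\p_i f^\beta_{\rm cat}\rangle\big)$ --- the ``mixed fundamental form'' of the pair. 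Rewriting $\sin^2\tau,\cos^2\tau,\sin\tau\cos\tau$ through $\cos 2\tau$ and $\sin 2\tau$ shows that $\Phi_\tau^*\geu$ is $\tau$-independent exactly when the coefficients of $\cos 2\tau$ and of $\sin 2\tau$ both vanish, i.e.
\begin{equation*}
g^{\alpha,\psi}_{\rm hel}=g^\beta_{\rm cat}\qquad\text{and}\qquad\TT\equiv0 .
\end{equation*}
Proving the proposition thus reduces to showing that this pair of conditions is equivalent to $\alpha=1$, $\psi=c\,{\rm Id}$, $\beta=\sqrt2$.

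For the metric-matching condition I would compute the two first fundamental forms from Eqs.\,\eqref{f cat beta} and \eqref{f a psi helicoid}; both turn out to be diagonal, the catenoid one being $g^\beta_{\rm cat}$ of Eq.\,\eqref{g cat beta} (a function of $y$ only), and the helicoid one having $(1,1)$-entry $(c\sinh(y/c))^{2\alpha}+\psi'(x)^2$ and $(2,2)$-entry $\alpha^2(c\sinh(y/c))^{2\alpha-2}\cosh^2(y/c)$. Equating $(1,1)$-entries separates $x$ from $y$: it forces $\psi'(x)^2\equiv k^2$ for some constant $k$, hence $\psi$ affine (by continuity of $\psi'$), and leaves the scalar identity $(c\cosh(y/c))^{2/(\beta^2-1)}=(c\sinh(y/c))^{2\alpha}+k^2$. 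Expanding both sides at $y=0$ yields $k^2=c^{2/(\beta^2-1)}$ from the constant terms and, by matching the lowest nonconstant powers of $y$, $\alpha=1$; with $\alpha=1$ the identity becomes a polynomial relation in $u=\cosh(y/c)$, and differentiating it in $u$ forces $\beta^2-1=1$, i.e.\ $\beta=\sqrt2$, whence $k^2=c^2$. With these values the $(2,2)$-entries are seen to coincide automatically, and conversely these values make both metrics equal to the positive-definite $c^2\cosh^2(y/c)\,\dd x^2+\cosh^2(y/c)\,\dd y^2$.

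It then remains to single out $\psi$ among the affine maps with $\psi'\equiv\pm c$ by imposing $\TT\equiv0$. With $\alpha=1$, $\beta=\sqrt2$ now fixed, a direct evaluation of $\TT$ shows that its $(1,1)$- and $(2,2)$-entries cancel identically, as do the oscillatory terms in its off-diagonal entry, leaving only the constant $\TT_{12}=\tfrac12\big(\psi'-c\big)$; thus $\TT\equiv0$ iff $\psi'\equiv c$, i.e.\ $\psi=c\,{\rm Id}$ up to an additive constant, which is an inessential vertical translation in $\R^3$. Combined with the converse of the previous step --- the classical fact that the standard catenoid and helicoid form an associate family, so that for these values $g^{\alpha,\psi}_{\rm hel}=g^\beta_{\rm cat}$ and $\TT\equiv0$ indeed hold --- this yields the stated equivalence. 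All the computations are elementary; the one place where an actual argument rather than bookkeeping is needed is extracting $\alpha=1$ and $\beta=\sqrt2$ from the single scalar identity produced by the $(1,1)$-matching, which I would handle by the Taylor-expansion-and-differentiation argument above. The off-diagonal entries of both metrics and of $\TT$ vanish for free, so nothing has to be checked there.
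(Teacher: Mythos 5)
Your proposal is correct and reaches the stated conclusion, but it is organised differently from the paper's proof, so a comparison is worth recording. The paper extracts the necessity of $\alpha=1$, $\beta=\sqrt2$, $\psi=c\,{\rm Id}$ entirely from the single off-diagonal component $\gtau_{12}$: the $\cos\tau\sin\tau$ prefactor isolates one scalar identity, Eq.\,\eqref{h}, which is then handled by separating the $x$- and $y$-dependence and inspecting the resulting function of $z=\sinh(y/c)$; sufficiency is afterwards a direct check of $\gtau_{11}$ and $\gtau_{22}$. You instead polarise at the outset, writing $\Phi_\tau^*\geu=\sin^2\tau\,g^{\alpha,\psi}_{\rm hel}+\cos^2\tau\,g^\beta_{\rm cat}+\sin 2\tau\cdot P$ with $P$ the symmetrised mixed form, so that $\tau$-independence is exactly the pair of conditions $g^{\alpha,\psi}_{\rm hel}=g^\beta_{\rm cat}$ and $P\equiv 0$; the parameters are then pinned down from the $(1,1)$-metric matching (Taylor expansion at $y=0$ plus power-matching in $u=\cosh(y/c)$), and $\psi$ from $P_{12}=0$. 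Both routes are elementary and complete; yours makes the ``if and only if'' structure transparent from the start and recovers the classical associate-family criterion (equal first fundamental forms plus vanishing mixed form) as a by-product, at the cost of computing one extra tensor, whereas the paper's is shorter because a single metric component already carries all the information needed for necessity.

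One caveat, which applies equally to the paper's own computation: with the catenoid taken literally from Eq.\,\eqref{f cat beta}, i.e.\ with first two components proportional to $(\sin x,\cos x)$, the $(1,1)$-entry of the mixed form equals $(c\sinh(y/c))^\alpha(c\cosh(y/c))^{1/(\beta^2-1)}\cos 2x$, which is never identically zero. So your assertion that the diagonal entries of $P$ ``cancel identically'' --- and likewise the paper's displayed formula for $\p_x\Phi_\tau$ --- presupposes the standard ordering $(\cos x,\sin x)$ in the catenoid's first two components. This is an orientation inconsistency in Eq.\,\eqref{f cat beta} rather than a defect of your argument, but it is the one place where ``nothing has to be checked'' is not quite true: the sufficiency direction genuinely depends on which of the two (mutually isometric) parametrisations of the catenoid is paired with $f^{\alpha,\psi}_{\rm hel}$, and with the wrong choice no value of the parameters yields an isometric deformation.
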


\begin{proof}
	Let $\gtau$ be the metric induced by $\Phi_{\tau}$, namely $\gtau:=(\Phi_{\tau})^\ast \geu$. Then
\begin{align*}
\gtau_{12} &= \frac{\p \Phi_\tau}{\p x} \cdot \frac{\p \Phi_\tau}{\p y}\\
&= \begin{bmatrix}
\cos(\tau) \Big[c\sinh \big(\frac{y}{c}\big)\Big]^\alpha \cos x - \sin(\tau) \Big[c\cosh\big(\frac{y}{c}\big)\Big]^{\frac{1}{\beta^2-1}}\sin x\\
\cos(\tau)\Big[c\sinh \big(\frac{y}{c}\big)\Big]^\alpha \sin x + \sin(\tau ) \Big[c\cosh\big(\frac{y}{c}\big)\Big]^{\frac{1}{\beta^2-1}}\cos x\\
\cos(\tau)\psi'(x)
\end{bmatrix} \cdot \\
&\quad \begin{bmatrix}
\cos(\tau)\alpha  \Big[c\sinh \big(\frac{y}{c}\big)\Big]^{\alpha-1}\cosh\big(\frac{y}{c}\big) \sin x + \sin \tau \frac{1}{\beta^2-1}\Big[c\cosh\big(\frac{y}{c}\big)\Big]^{\frac{1}{\beta^2-1}-1} \sinh\big(\frac{y}{c}\big)\cos x\\
-\cos(\tau)\alpha  \Big[c\sinh \big(\frac{y}{c}\big)\Big]^{\alpha-1}\cosh\big(\frac{y}{c}\big) \cos x + \sin \tau \frac{1}{\beta^2-1}\Big[c\cosh\big(\frac{y}{c}\big)\Big]^{\frac{1}{\beta^2-1}-1} \sinh\big(\frac{y}{c}\big)\sin x\\
\sin(\tau) \frac{1}{\beta^2-1}\Big[c\cosh\big(\frac{y}{c}\big)\Big]^{\frac{1}{\beta^2-1}-1}
\end{bmatrix}\\
&= \cos(\tau)\sin(\tau) c^{\alpha-1+\frac{1}{\beta^2-1}}\Big[\cosh\big(\frac{y}{c}\big)\Big]^{\frac{1}{\beta^2-1}-1}\Big[\sinh \big(\frac{y}{c}\big)\Big]^{\alpha-1} \bigg\{-\alpha\Big[\cosh\big(\frac{y}{c}\big)\Big]^2 +\\
&\quad  \frac{1}{\beta^2-1}\Big[\sinh \big(\frac{y}{c}\big)\Big]^2\bigg\}+ \cos(\tau)\sin(\tau) \frac{1}{\beta^2-1} \psi'(x)\Big[\cosh\big(\frac{y}{c}\big)\Big]^{\frac{1}{\beta^2-1}-1}. 
\end{align*}	

Thus, for $\Phi_\tau$ to be independent of $\tau$, we need
\begin{equation}\label{h}
0 = \frac{\psi'(x)}{\beta^2-1} +  c^{\alpha-1+\frac{1}{\beta^2-1}}\Big[\sinh \big(\frac{y}{c}\big)\Big]^{\alpha-1} \bigg\{-\alpha\Big[\cosh\big(\frac{y}{c}\big)\Big]^2 +\frac{1}{\beta^2-1}\Big[\sinh \big(\frac{y}{c}\big)\Big]^2\bigg\}. 
\end{equation}
The first term on the right-hand side of Eq.\,\eqref{h} depends only on $x$, and the second term only on $y$. So both terms must be constants, and, in particular, $\psi'(x)=0$. Now let us introduce $z:=\sinh(y/c)$; in this new variable, the second term (call it $S$) can be expressed as
\begin{equation*}
S(z) = c^{\alpha-1+\frac{1}{\beta^2-1}} z^{\alpha-1}\Big\{-\alpha +\big(\frac{1}{\beta^2-1}-\alpha\big)z^2\Big\}.
\end{equation*}
This forces $\alpha=1$ and $\frac{1}{\beta^2-1} - \alpha =0$, namely $\beta = \sqrt{2}$. Therefore, $S(z)=-c$, and by Eq.\,\eqref{h} we can deduce $\psi(x)=cx$. The necessity has thus been established.

\smallskip

For sufficiency, we compute 
\begin{align*}
	\gtau_{11} &= \frac{\p \Phi_\tau}{\p x} \cdot \frac{\p \Phi_\tau}{\p x}\\
	&= \bigg\{\cos \tau \Big(c\sinh\big(\frac{y}{c}\big)\Big)^\alpha\cos x + \sin\tau \Big(c\cosh\big(\frac{y}{c}\big) \Big)^{\frac{1}{\beta^2-1}} \sin x \bigg\}^2 \\
&\qquad\qquad +\bigg\{ -\cos\tau \Big(c\sinh\big(\frac{y}{c}\big)\Big)^\alpha \sin x + \sin\tau \Big(c\cosh\big(\frac{y}{c}\big) \Big)^{\frac{1}{\beta^2-1}} \cos x \bigg\}^2 + \cos^2\tau [\psi'(x)]^2 \\
&= \cos^2\tau \Big(c\sinh\big(\frac{y}{c}\big)\Big)^{2\alpha } + \sin^2\tau \Big(c\cosh\big(\frac{y}{c}\big) \Big)^{\frac{2}{\beta^2-1}} + \cos^2\tau [\psi'(x)]^2,
	\end{align*}
as well as
\begin{align*}
\gtau_{22} &= \frac{\p \Phi_\tau}{\p y} \cdot \frac{\p \Phi_\tau}{\p y}\\
	&= \bigg\{\cos\tau\cdot \alpha \Big[c\sinh\big(\frac{y}{c}\big)\Big]^{\alpha-1}\cosh \big(\frac{y}{c}\big) \sin x+ \sin\tau \frac{1}{\beta^2-1} \Big[c\cosh\big(\frac{y}{c}\big)\Big]^{\frac{1}{\beta^2-1}-1}\sinh\big(\frac{y}{c}\big) \cos x\bigg\}^2 \\
	&\quad + \bigg\{-\cos\tau \cdot \alpha \Big[c\sinh\big(\frac{y}{c}\big)\Big]^{\alpha-1} \cosh \big(\frac{y}{c}\big)  \cos x + \sin\tau \frac{1}{\beta^2-1} \Big[c\cosh\big(\frac{y}{c}\big)\Big]^{\frac{1}{\beta^2-1}-1}\sinh\big(\frac{y}{c}\big)\sin x \bigg\}^2 \\
	&\quad + \sin^2\tau \frac{1}{(\beta^2-1)^2} \Big(c^2\cosh^2\big(\frac{y}{c}\big)\Big)^{\frac{1}{\beta^2-1}-1}\\
	&= \cos^2\tau \,\alpha^2 \Big[c\sinh\big(\frac{y}{c}\big)\Big]^{2\alpha -2} \cosh^2\big(\frac{y}{c}\big) + \sin^2\tau \frac{1}{(\beta^2-1)^2} \Big(\cosh\big(\frac{y}{c}\big)\Big)^{\frac{2}{\beta^2-1}} c^{\frac{2}{\beta^2-1}-2}.
\end{align*}
In the case $\alpha=1, \beta=\sqrt{2}$ and $\psi(x)=cx$, the above identities together with $1+\sinh^2\theta=\cosh^2\theta$ immediately gives us
\begin{equation*}
\gtau_{11}=\gtau_{22}=c^2\cosh^2\big(\frac{y}{c}\big), \qquad \gtau_{12} = 0 \qquad \text{for all } 0\leq \tau\leq\frac{\pi}{2}.
\end{equation*}
Indeed, $\Phi_\tau$ is independent of $\tau$. The proof is now complete.  \end{proof}

Proposition \ref{propn: no deformation} suggests that the ``generalised helicoids'' considered in Cao--Huang--Wang \cite{chw1, chw2} and the ``generalised catenoids'' considered in Chen--Slemrod--Wang \cite{cswb}, Cao--Huang--Wang \cite{chw1} and Christoforou \cite{christofourou} are distinct geometric objects, unless they are precisely the classical helicoids and catenoids. In particular, one cannot prove the existence of isometric immersions of the latter via a natural isometric deformation in the ambient space $\R^3$ from the former. This provides further characterisation of the families of isometrically immersible metrics found in \cite{chw1, chw2, cswb, cswc, christofourou}.

\section{Generalised Enneper metrics}
In this section, we consider the following one-parameter family of metrics:
\begin{equation}\label{g, enneper}
\gaa = \begin{bmatrix}
(1+x^2 + y^2)^\alpha & 0  \\
0& (1+x^2 + y^2)^\alpha
\end{bmatrix}.
\end{equation}
When $\alpha = 2$, the metric $g^{(2)}$ gives the well-known classical minimal surface, known as the ``Enneper surface'' (see \cite{asg}). It has the following global isometric immersion:
\begin{equation}
f_{\rm enn}(x,y):=\bigg(x-\frac{1}{3}x^3 + xy^2, \, -y-x^2y + \frac{1}{3}y^3, \, x^2-y^2 \bigg)^\top,
\end{equation}
with the Gauss curvature
\begin{equation}
\kappa_{\rm enn} = \frac{-4}{(1+x^2+y^2)^4}.
\end{equation}

We call the metrics $\{\gaa\}$ in Eq.\,\eqref{g, enneper} ``generalised Enneper metrics''. For simplicity in notations, introduce
\begin{equation}
\TT(x,y):= 1+x^2+y^2,
\end{equation}
so that $E=G=\TT^\alpha$, $F = 0$ in the metric $\gaa$. Thus, we have
\begin{equation*}
\begin{cases}
E_x =2\alpha \TT^{\alpha -1}x, \quad E_y = 2\alpha \TT^{\alpha-1}y,\\
E_{xx} = 2\alpha\TT^{\alpha-1} + 4\alpha(\alpha-1)\TT^{\alpha-2}x^2,\quad E_{yy} = 2\alpha\TT^{\alpha-1} + 4\alpha(\alpha-1)\TT^{\alpha-2}y^2.
\end{cases}
\end{equation*}
The Christoffel symbols can be computed from Eq.\,\eqref{Gamma}:
\begin{align*}
&\G^1_{11} = \frac{\alpha x}{\TT}, \qquad \G^1_{12} = \frac{\alpha y}{\TT}, \qquad \G^1_{22} = -\frac{\alpha x}{\TT},\\
&\G^2_{11} = -\frac{\alpha y}{\TT}, \qquad \G^2_{12} = \frac{\alpha x}{\TT}, \qquad \G^2_{22} = \frac{\alpha x}{\TT}.
\end{align*}
Next, the non-trivial component of Riemann curvature becomes
\begin{align*}
R^1_{212} = \p_1 \G^1_{22} - \p_2\G^1_{12} + \G^1_{22}\G^1_{11} + \G^2_{22}\G^1_{21} - (\G^1_{12})^2 - \G^2_{12}\G^1_{22} = \frac{2\alpha}{\TT^2};
\end{align*}
thus 
\begin{equation}\label{kappa for enneper}
\kappa = \frac{g_{11}R^1_{212}}{|g|} = -2\alpha \TT^{-2-\alpha} < 0 \qquad \text{ whenever } \alpha >0.
\end{equation}
In addition, we have
\begin{equation*}
\gamma = \sqrt{-\kappa} = \frac{\sqrt{2\alpha}}{\TT^{2+\alpha}} \,\, \text{ and } \,\, \na \gamma = -\big(2+{\alpha}\big) \sqrt{2\alpha} \TT^{-2-\frac{\alpha}{2}}\begin{bmatrix}
x\\
y
\end{bmatrix},
\end{equation*}
so
\begin{equation*}
\frac{\na\gamma}{\gamma} = -\frac{2+\alpha}{\TT} \begin{bmatrix}
x\\
y
\end{bmatrix};
\end{equation*}
on the other hand, 
\begin{equation}
\frac{\na E}{E} = \frac{2\alpha \TT^{\alpha-1}\begin{bmatrix}
x\\
y
\end{bmatrix}}{\TT^\alpha} = \frac{2\alpha}{\TT}\begin{bmatrix}
x\\
y
\end{bmatrix}.
\end{equation}
Therefore, we also have
\begin{equation}
\frac{\na\gamma}{\gamma} = \beta\frac{\na E}{E} \qquad \text{ where } \,\,\beta = \frac{-2\alpha}{2+\alpha}.
\end{equation}

The source terms $\sone$ and $\stwo$ are given as follows:
\begin{align}\label{S1, enneper}
\sone &= -\frac{1}{2} \frac{2\alpha y}{\TT} (u+v)^2(u-v) + \frac{\alpha x}{\TT} (u+v)^2 + \beta v(u+v) \frac{2\alpha x}{\TT} \nonumber\\
&\quad + \frac{1}{2}\frac{2\alpha y}{\TT} u + \Big\{-\frac{1}{2}\frac{2\alpha y}{\TT}+ \beta \frac{2\alpha y}{\TT}\Big\} v + \frac{1}{2} \frac{2\alpha x}{\TT}\nonumber\\
&= \frac{\alpha y}{\TT} \bigg\{-(u+v)^2(u-v) + \mu (u+v)^2 - \frac{2\alpha}{2+\alpha} \mu v(u+v) + u - \frac{5\alpha + 2}{2+\alpha} v + \mu \bigg\}
\end{align}
and
\begin{align}\label{S2, enneper}
\stwo &= -\frac{1}{2} \frac{2\alpha y}{\TT} (u+v)(u-v)^2 + \frac{\alpha x}{\TT} u^2 + \Big\{\frac{\alpha x}{\TT} + \beta \frac{2\alpha x}{\TT}\Big\}v^2\nonumber\\
&\quad - \Big\{\frac{2\alpha x}{\TT} + \beta \frac{2\alpha x}{\TT} \Big\} uv + \Big\{\frac{3}{2} \frac{2\alpha y }{\TT} + \beta \frac{2\alpha y}{\TT}\Big\} u - \Big\{\frac{\alpha y}{\TT} + \beta\frac{2\alpha y}{\TT}\Big\}v+ \frac{1}{2}\frac{2\alpha x}{\TT}\nonumber\\
&= \frac{\alpha y}{\TT} \bigg\{-(u+v)(u-v)^2 + \mu u^2 + \frac{2-3\alpha}{2+\alpha} \mu v^2 - \frac{4-2\alpha}{2+\alpha}\mu uv - \frac{2-3\alpha}{2+\alpha}(u+v) +\mu  \bigg\}.
\end{align}
Throughout the current section we write $\mu$ for
\begin{equation}
x \equiv \mu y.
\end{equation}

Thus, for $y\geq 0$, the signs of $\sone,\stwo$ are equal to those of the polynomials $\pone, \ptwo$:
\begin{eqnarray}
&&\pone:=-(u+v)^2(u-v) + \mu (u+v)^2 - \frac{2\alpha}{2+\alpha} \mu v(u+v) + u - \frac{5\alpha + 2}{2+\alpha} v + \mu,\,\,\,\\
&&\ptwo := -(u+v)(u-v)^2 + \mu u^2 + \frac{2-3\alpha}{2+\alpha} \mu v^2 - \frac{4-2\alpha}{2+\alpha}\mu uv - \frac{2-3\alpha}{2+\alpha}(u+v) +\mu.\,\,\,
\end{eqnarray}
The zero loci of the above polynomials in the $(u,v)$-plane are fairly complicated. Fig.\,\ref{figure enneper} shows $\{\pone=0\}$ and $\{\ptwo=0\}$ for a typical choice of $\alpha,\mu$, and in the complimentary material in \S 8 we include the animation of the two-parameter ($\alpha,\mu$) family of the zero loci. Nevertheless, one distinctive common feature about these zero loci can be observed: the manner in which the rightmost branches of $\{\pone=0\}$ and $\{\ptwo=0\}$ intersect guarantees the existence of a invariant region $Q$ with vertices
$$
\bullet_t = (a+b,b),\qquad \bullet_r = (a+2b, 0),\qquad \bullet_d = (a+b,-b),\qquad \bullet_l = (a,0)
$$
with some $a$ and $b>0$, due to Proposition \ref{propn: inv regions VIP}. Furthermore, by experimenting with different ranges of the parameters $\alpha, \mu$, we find that for each $\alpha\in [1,10]$ and $\mu \in [0,1]$, the above constants $a,b$ can be chosen independent of $\mu$. By arguments similar to those in the previous sections, we can deduce:
\begin{theorem}\label{thm: enneper}
Let $y_0>0$ be arbitrary and denote the initial data by 
$$
(u,v)|_{y=y_0} =: (u_0, v_0) : \R \map \R^2.
$$
For each $\alpha \in [1,10]$, there exist positive numbers $a,b>0$ such that the following holds: Assume that
\begin{equation}
a \leq \inf_\R \,(u_0+v_0) \leq \sup_\R (u_0+v_0) \leq a+2b, \qquad -a \geq \sup_\R \,(u_0 - v_0) \leq \inf_\R (u_0 - v_0) \leq -a-2b.
\end{equation}
Then there exists an $L^\infty$ weak solution to the Gauss--Codazzi equations \eqref{gauss eq} and \eqref{codazzi eqs} in the  interior of the wedge $\Omega = \{(x,y)\in\R^2: x \geq 0, y \geq 0, y\leq x\}$ for the generalised Enneper metric
\begin{equation}
\gaa = \begin{bmatrix}
(1+x^2 + y^2)^\alpha & 0  \\
0& (1+x^2 + y^2)^\alpha
\end{bmatrix}.
\end{equation}
\end{theorem}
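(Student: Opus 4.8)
The plan is to run the same scheme as in the proofs of Theorems~\ref{thm: H2}, \ref{thm: helicoid} and~\ref{thm: helicoid 2}: (i) exhibit a compact convex polygonal invariant region $Q$ in the $(u,v)$-plane for the viscous system~\eqref{system of balance law for w,z}; (ii) conclude that, for initial data taking values in $Q$, the artificial-viscosity solutions $(u^\e,v^\e)$ remain in $Q$, so that $\{(u^\e,v^\e)\}$---and hence $\{(L^\e,M^\e,N^\e)\}$, recovered from $(u^\e,v^\e)$ through the algebraic relations of \S 2.2---are uniformly bounded in $L^\infty_{\rm loc}(\Omega)$; (iii) check that $\{(L^\e,M^\e,N^\e)\}$ satisfy the Gauss--Codazzi equations approximately in the sense of Proposition~\ref{propn: chen-li}, which follows verbatim from Chapter~4 of Cao--Huang--Wang~\cite{chw1}; and (iv) pass to the vanishing-viscosity limit $\e\to0^+$ via Proposition~\ref{propn: chen-li}, yielding an $L^\infty$ weak solution of~\eqref{gauss eq}\eqref{codazzi eqs} (and, as in the earlier sections, a corresponding $C^{1,1}$ isometric immersion). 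Steps (ii)--(iv) are routine modulo the cited results; the content of the theorem is the construction of $Q$.

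For (i), by Proposition~\ref{propn: inv regions VIP} it suffices to produce, for each $\alpha\in[1,10]$, constants $a,b>0$ independent of $\mu$ such that the diamond $Q$ with vertices $\bullet_t=(a+b,b)$, $\bullet_r=(a+2b,0)$, $\bullet_d=(a+b,-b)$, $\bullet_l=(a,0)$ satisfies $\sone\le0$ on $e_1$, $\stwo\le0$ on $e_2$, $\sone\ge0$ on $e_3$ and $\stwo\ge0$ on $e_4$. On $\Omega$ one has $y\ge0$, so the common factor $\alpha y/\TT$ in~\eqref{S1, enneper}--\eqref{S2, enneper} is nonnegative and the signs of $\sone,\stwo$ coincide with those of the cubics $\pone,\ptwo$. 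Each edge $e_i$ lies on a line of the form $u+v=\text{const}$ or $u-v=\text{const}$; substituting that relation into $\pone$ (resp.\ $\ptwo$) collapses the sign condition on $e_i$ into a one-variable polynomial inequality whose coefficients are explicit functions of $a,b,\alpha$ and $\mu=x/y$, to be verified over the range of $\mu$ allowed on $\Omega$, which is a compact interval (the range $[0,1]$ appearing in the source-term discussion above). Geometrically the requirement is that the left pair of edges of $Q$ lie on the correct side of the rightmost branch of $\{\pone=0\}$ and the right pair on the correct side of the rightmost branch of $\{\ptwo=0\}$; the transversal crossing of these two branches near the $u$-axis (Figure~\ref{figure enneper}) is what makes such a placement possible, and it is the restriction $\alpha\in[1,10]$ that keeps the branch configuration in the required shape. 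In practice one checks the four inequalities at a representative $(\alpha,\mu)$ and propagates by the continuous dependence of the coefficients on $(\alpha,\mu)$ over the compact box $[1,10]\times[0,1]$, as illustrated by the animation referenced in \S 8; the hypotheses on the initial data are precisely the assertion that $(u_0,v_0)$ takes values in $Q$.

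I expect (i) to be the main obstacle and the only genuinely new point compared with \S\S 4--5. There the zero loci of the source terms are straight lines or a single hyperbola, and the invariant square can be read off by inspection; here $\{\pone=0\}$ and $\{\ptwo=0\}$ are honest cubic curves depending on two parameters, so one must control an entire family of one-variable cubic inequalities uniformly over the box $[1,10]\times[0,1]$. Making this fully rigorous---rather than merely numerically convincing---requires either explicit coefficient estimates (for example, showing that once $u$ is bounded below by a suitable $a=a(\alpha)$ the cubic leading terms dominate the lower-order, $\mu$-dependent terms with a fixed sign along each edge) or a compactness reduction to finitely many parameter values together with continuity. Everything else---the maximum-principle argument behind Proposition~\ref{propn: inv region}, the uniform $L^\infty_{\rm loc}$ bound on the second fundamental form, and the compensated-compactness passage to the limit---transfers unchanged from the earlier sections and from~\cite{chw1}.
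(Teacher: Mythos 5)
Your proposal follows essentially the same route as the paper: the same diamond $Q$ with vertices $(a,0)$, $(a+b,\pm b)$, $(a+2b,0)$, the same reduction of the sign conditions to the cubics $\pone,\ptwo$ after factoring out $\alpha y/\TT\ge 0$ on the wedge, and the same passage to the limit via Proposition~\ref{propn: chen-li}. The paper itself settles your step (i) only by computer-assisted inspection of the zero loci over $(\alpha,\mu)\in[1,10]\times[0,1]$ (as acknowledged in \S 8), so your candid identification of that step as the point needing explicit coefficient estimates is, if anything, a more honest account of the same argument.
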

By the symmetry of the $x,y$ variables in $\gaa$, Theorem \ref{thm: enneper} implies the existence of $C^{1,1}$ isometric immersions in the whole $\R^2$, with possible exception on the diagonals $\{y=\pm x\}$.

\begin{figure}[h]
    \centering
\includegraphics[scale=0.5]{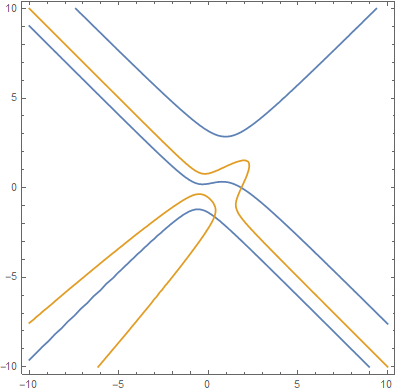}
    \caption{The zero loci for $\pone$ (blue) and $\ptwo$ (orange), with $c=1, d=2$}
    \label{figure enneper}
\end{figure}

\section{``Reciprocal-type'' metrics: Non-existence of invariant regions}
In this section we discuss a special class of negatively curved metrics, for which the source terms $\sone, \stwo$ are easy to compute, but the invariant regions $Q$ of the form in Proposition \ref{propn: inv regions VIP} fail to exist. It manifests the potential limitations of the compensated compactness/invariant region method. 

We consider the metrics of the ``reciprocal-type'':
\begin{equation}\label{g, reciprocal type}
g=\begin{bmatrix}
E(y) & 0 \\
0& E(y)^{-1}
\end{bmatrix}, \qquad E(y)>0.
\end{equation}
The Brioschi's formula (see \cite{asg}) gives us
\begin{equation}\label{kappa for reciprocal type}
\kappa = -\frac{1}{2\sqrt{EG}}\bigg\{\frac{\p}{\p x} \Big(\frac{G_x}{\sqrt{EG}}\Big)+\frac{\p}{\p y}\Big(\frac{E_y}{\sqrt{EG}}\Big)\bigg\} = -\frac{E_{yy}}{2}.
\end{equation}
Hence, we require $E$ to be strictly convex and positive. Thanks to Eqs.\,\eqref{Gamma}\eqref{tilde Gamma} we have
\begin{equation*}
\tg^2_{22}=-\frac{E_y}{2E}+\frac{\gamma_y}{\gamma}, \quad \tg^2_{11} = -\frac{EE_y}{2}, \quad \tg^1_{12} = \frac{E_y}{2E} + \frac{\gamma_y}{2\gamma}, \quad \text{other } \tg^i_{jk} = 0.
\end{equation*}
The source terms in Eqs.\,\eqref{S1}\eqref{S2} thus become:
\begin{align}
&\sone = -\frac{3E_y}{2E} u + \big(-\frac{E_y}{2E}+\frac{\gamma_y}{\gamma}\big)v -\frac{E_y}{2E}(u-v)(u+v)^2,\nonumber\\
&\stwo = -\frac{3E_y}{2E} u+ \big(\frac{E_y}{2E}-\frac{\gamma_y}{\gamma}\big)v -\frac{E_y}{2E}(u-v)^2(u+v).
\end{align}

To proceed, let us find conditions on $E$ such that $\gamma_y/\gamma$ is proportional to $E_y/E$: in this case we can factor out $E_y/E$ in $\sone, \stwo$. Indeed, suppose there exists a constant $\alpha$ (to be determined) such that
\begin{equation}
\frac{\gamma_y}{\gamma} = \alpha \frac{E_y}{E}.
\end{equation}
By Lemma \ref{lemma: derivatives of kappa and gamma} and Eq.\,\eqref{kappa for reciprocal type}, we obtain the ODE:
\begin{equation}\label{Eyy}
\frac{E_{yyy}}{E_{yy}} = 2\alpha \frac{E_y}{E}.
\end{equation}
Taking logarithmic on both sides, there is a constant $C>0$ such that
\begin{equation}
E_{yy} = CE^\alpha.
\end{equation}
From the above ODE, we find that exponential and hyperbolic trigonometric functions are good choices for $E=E(y)$. 

\subsection{Case 1: exponential}
In this case let us consider
\begin{equation}
E(y) = Ae^{\omega y}.
\end{equation}
Thus, $C=\omega>0$ and $\alpha =1$ in Eq.\,\eqref{Eyy}. We can easily compute that 
\begin{equation}
\kappa = -\frac{\omega^2e^{\omega y}}{2} < 0,
\end{equation}
as well as
\begin{equation*}
\frac{E_y}{E} = \frac{2\gamma_y}{\gamma} = \omega.
\end{equation*}
Moreover, the source terms are given by
\begin{align}
&\sone = -\frac{3\omega}{2} u - \frac{\omega e^{2\omega y}}{2} (u-v)(u+v)^2,\nonumber\\
&\stwo = -\frac{3\omega}{2} u - \frac{\omega e^{2\omega y}}{2} (u-v)^2(u+v),
\end{align} 
whose signs are determined by $\pone,\ptwo \in \R[u,v]$ respectively:
\begin{eqnarray}
&&\pone = 3u + k(u-v)(u+v)^2,\\
&&\ptwo = 3u + k(u+v)(u-v)^2,
\end{eqnarray}
with the factor
\begin{equation}
k\equiv k(\omega,y) = e^{2\omega y} > 0.
\end{equation}

\subsection{Case 2: hyperbolic cosine}
In this case we consider
\begin{equation}
E(y) = A \cosh(\omega y), \qquad A, \omega>0.
\end{equation}
Then by Eq.\,\eqref{Eyy} we have $\alpha = 1/2$, and 
\begin{equation}
\kappa = -\omega^2\cosh(\omega y)/2<0.
\end{equation}
Also, we have
\begin{equation*}
\frac{E_y}{E} = \frac{2\gamma_y}{\gamma} = \omega \tanh (\omega y),
\end{equation*}
and the source terms are given by
\begin{align}
&\sone=-\frac{\omega\tanh(\omega y)}{2}\Big\{3u + [\cosh(\omega y)]^2 (u-v)(u+v)^2\Big\},\nonumber\\
&\stwo = -\frac{\omega\tanh(\omega y)}{2}\Big\{3u+ [\cosh(\omega y)]^2 (u-v)^2(u+v)\Big\}.
\end{align}
Thus, whenever $y>0$, the signs of $\sone,\stwo$ are opposite to those of
\begin{eqnarray}
&&\pone = 3u + \widetilde{k}(u-v)(u+v)^2,\\
&&\ptwo = 3u + \widetilde{k}(u+v)(u-v)^2,
\end{eqnarray}
with the factor
\begin{equation}
\widetilde{k} \equiv \widetilde{k}(\omega,y) = \cosh^2(\omega y)\geq 1.
\end{equation}
Here $\pone,\ptwo$ take the same form as those in Case 1, \S 6.1; the only difference is in the factor $\widetilde{k}$.

As seen from Fig.\,\ref{figure k} ({\it cf.} the supplementary materials in \S 9 for the animation of the one-parameter family $k \in [0,3]$), the zero locus of $\pone$ ($\ptwo$, resp.) in the $(u,v)$-plane is a curve locally look like $\{v=u^3\}$ ($\{v=u^3\}$, resp.) near the origin. Moreover, in the right half the $(u,v)$-plane to the zero locus of $\pone$ ($\ptwo$, resp.) one has $\pone > 0$ ($\ptwo >0$, resp.) It is clear that the invariant region $Q$ of the form in Proposition \ref{propn: inv regions VIP} cannot exist. Therefore, our method in \S \S 4--6 does not apply to the reciprocal-type metrics \eqref{g, reciprocal type}.

\begin{figure}[h]
    \centering
\includegraphics[scale=0.5]{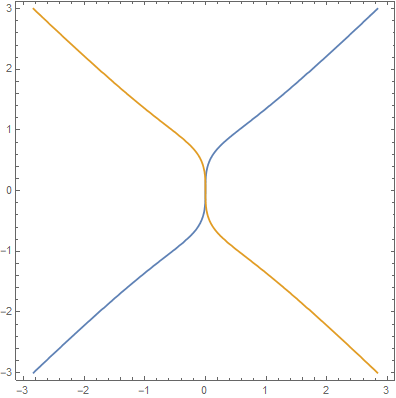}
    \caption{The zero loci for $\pone$ (blue) and $\ptwo$ (orange), with $k=1.52$}
    \label{figure k}
\end{figure}

\section{Conclusions}

In this paper we study isometric immersions of negatively curved surfaces by the method of compensated compactness. The existence of $C^{1,1}$ isometric immersions is guaranteed by the existence of invariant regions $Q$; the latter is deduced from the relative positions of the zero loci of source terms $\sone, \stwo$ of the hyperbolic balance laws associated to the Gauss--Codazzi system, namely Eqs.\,\eqref{wz 1} and \eqref{wz 2}. 

In Theorem \ref{thm: H2} we show the existence of isometric immersions of the standard Lobachevsky plane away from the $x$-axis into $\R^3$. In Theorems \ref{thm: helicoid} and \ref{thm: helicoid 2} we prove the existence of isometric immersions of an infinite family of helicoid-type metrics into $\R^3$. Finally, in Theorem \ref{thm: enneper} we prove the existence of isometric immersions, with computer assistance, for a one-parameter family of metrics containing the Enneper surface. The regularity for the above isometric immersions is $C^{1,1}$, in view of the method of compensated compactness we adopt in this paper. 

The supplementary materials to this work can be downloaded from Wolfram Cloud, \texttt{Siran  Li$\_$isometric immerions of negatively curved surfaces.\,Mathematica code.nb.}

\bigskip
\noindent
{\bf Acknowledgement}.
Siran Li would like to thank Professors Gui-Qiang Chen, Pengfei Guan, Dmitry Jakobson and Marshall Slemrod for many insightful discussions. Moreover, he is deeply indebted to Dr.\,Xiaochun Meng, Miss Yikai Chen and Miss Yishu Gong for their generous help on graphing software. Part of this work has been done during the author's stay as a CRM--ISM postdoctoral fellow at the Centre de Recherches Math\'{e}matiques, Universit\'{e} de Montr\'{e}al and the Institut des Sciences Math\'{e}matiques. Siran Li would like to thank these institutes for their hospitality.

\bigskip

\end{document}